\newtheorem{thm}{Theorem}[section]
\newtheorem{lem}[thm]{Lemma}
\newtheorem{claim}{Claim}
\newtheorem{indcase}{Induction Case}
\theoremstyle{definition}
\newtheorem{problem}[thm]{Problem}
\newtheorem{remark}[thm]{Remark}
\newcommand{\up}{\textup}
\newcommand{\con}{\operatorname{con}}
\newcommand{\occ}{\operatorname{occ}}
\newcommand{\ba}{\mathbf{a}}
\newcommand{\bb}{\mathbf{b}}
\newcommand{\bc}{\mathbf{c}}
\newcommand{\bM}{\mathbf{M}}
\newcommand{\bp}{\mathbf{p}}
\newcommand{\bq}{\mathbf{q}}
\newcommand{\bu}{\mathbf{u}}
\newcommand{\bv}{\mathbf{v}}
\newcommand{\bw}{\mathbf{w}}
\newcommand{\bx}{\mathbf{x}}
\newcommand{\by}{\mathbf{y}}
\newcommand{\bz}{\mathbf{z}}
\newcommand{\sW}{\mathscr{W}}
\newcommand{\sV}{\mathscr{V}}
\newcommand{\sU}{\mathscr{U}}
\newcommand{\sA}{\mathscr{A}}
\newcommand{\cK}{\mathcal{K}}
\newcommand{\cV}{\mathcal{V}}
\newcommand{\irr}{ir\-re\-dun\-dant}
\newcommand{\nfb}{non-finitely based}
\begin{document}
\title[Irredundant axiomatisability]{Infinite irredundant equational axiomatisability for a finite monoid}
\begin{abstract}
It is shown that a finite monoid can have an infinite irredundant basis of equations.
\end{abstract}
\author[M. Jackson]{Marcel Jackson}
\address{Department of Mathematics and Statistics, La Trobe University VIC 3086, Australia}
\email{m.g.jackson@latrobe.edu.au}
\subjclass[2010]{20M07} %
\thanks{The author was supported by ARC Discovery Project DP1094578 and Future Fellowship FT120100666}
\maketitle


\section{Introduction} 
The problem of redundancy in axiomatic systems goes back at least to the millennia-long saga of the possible redundancy of the parallel postulate amongst the axioms for Euclidean geometry.  In the case of axiomatic systems in an algebraic setting, there are also famously difficult problems relating to minimal bases for Boolean algebras \cite{mcc}, and the seminal work of Tarski on irredundant systems~\cite{tar75}.  When a theory admits a finite axiomatisation, then there always exists an axiomatisation without redundancies---an \emph{irredundant axiomatisation}.  Indeed, one can simply remove axioms one by one from any finite axiomatisation until no further redundancies exist.  For systems admitting no finite axiomatisation however, it is possible that every complete axiomatisation has redundancies.  Examples of equational systems without irredundant bases, as well as infinite irredundant equational systems go back to the 1960s (see Tarski \cite{tar66} for example).  Even a single \emph{finite} algebraic structure can be without a finite axiomatisation for its equational theory (Lyndon \cite{lyn}), and in this particular case, not only is it still possible to have no irredundant axiomatisation~\cite{L15,gM}, it appears reasonably unusual for an irredundant axiomatisation to exist~\cite[Problem 2.6]{V}.  

The first examples of finite algebras whose equational theory has an infinite irredundant basis appear to be the semigroups of Jackson~\cite{J05a}. %
More examples have recently been discovered: %
\begin{enumerate}[1]
\item[$\bullet$] by applying the constructions of Jackson~\cite[\S5 and~\S7.1]{J08} to a certain digraph with an infinite {\irr} axio\-ma\-ti\-za\-tion for its universal Horn sentences~\cite[Example 17]{JT}, a type $\langle 2,2,2,1,1\rangle$ algebra of order nine with an infinite {\irr} equational basis is obtained; %

\item[$\bullet$] for each $n \geq 8$, there exists at least one involution semigroup of order $n$ with infinite {\irr} equational basis~\cite{L}. %
\end{enumerate}

In the present article, we consider semigroups with identity element---``\textit{monoids}''---as algebras in the signature $\{\,\cdot\,,1\}$ of type $\langle 2,0\rangle$. %
The case of monoids appears to present particular challenges to infinite irredundant axiomatisation, as we now explain.  

First, fix some notion of \emph{length} of an equation: the length as a word in prefix notation for example (but any reasonable measure of length will suffice).
The non-finite basis property can be broadly described as the property that short equations are insufficient to derive long equations. %
In particular, any basis for a {\nfb} algebra must contain arbitrarily long equations. %
To have an {\irr} basis, it is additionally required that long equations also do not imply too many relatively short equations: otherwise some individual equation can always be removed from any basis, as it will follow from some sufficiently long equations in the basis. %
But in finite algebras, long equations always imply some short equations, by way of identification of variables, and this makes it difficult to avoid redundancies. %
This already goes some way to suggesting why infinite irredundant axiomatisability appears to be a rare property for finite algebras.
In the case of monoids however, ``identification of variables'' may be supplanted by first assigning~$1$ to variables, then followed by eliminating all excessive occurrences of~$1$ with the equations $x1\approx x\approx 1x$. %
This completely simplistic re\-duc\-tion gives long equations enormous deductive influence over short equations under the monoid signature. %
A finite nilpotent monoid in Jackson~\cite{J05a} specifically demonstrates that the property of having an infinite {\irr} axio\-ma\-ti\-za\-tion can be destroyed under a transition from the semigroup signature to the monoid signature. %
This naturally leads to the following problem. %

\begin{problem}[{\cite[page~421]{J05a}}] \label{problem}
Is there a finite monoid with an infinite {\irr} basis of monoid identities? %
\end{problem}

\noindent This problem has also been posed in the updated version of Volkov's survey~\cite{V}, as accessed at the time of writing. %

In the present article, a finite nilpotent monoid with an infinite {\irr} basis of monoid equations is exhibited, thereby providing a positive solution to Problem~\ref{problem}. %
The argument is substantially more involved than the broad approach that was given in Jackson~\cite{J05a}. %
In the monoid signature, the property appears to be extremely fragile, and it seems unlikely to hold anywhere near as commonly as when the examples are considered in the semigroup signature. %

Recall that a \emph{variety} of algebras is an equationally defined class of algebras of the same signature. The present work arose out of other recent developments concerning finite monoids whose variety has continuum many subvarieties.   The connection is as follows: if $\mathcal{V}$ is a variety with an infinite irredundant axiomatisation for its equations and $\mathcal{W}$ is a finitely based variety with $\mathcal{V}\subseteq \mathcal{W}$, then $\mathcal{W}$ has continuum many subvarieties.  The example in the present article  is contained in many finitely based varieties.

\section{Preliminaries}

Standard notation is followed; boldface lower case letters $\ba, \bb, \bc, \ldots, \bz$, often with subscripts and primes, denote words, and non-boldface lower case letters denote variables occurring in words. %
The expression $\ba \leq \bb$ indicates that~$\ba$ is a \emph{subword} or \textit{factor} of~$\bb$, that is, $\bb = \bx \ba \by$ for some possibly empty words~$\bx$ and~$\by$. %

In general, words are taken from some fixed countably infinite alphabet~$\sA$ of variables built over the symbols in $\{a,b,c,\ldots,z\}$, sometimes with subscripts. %
The set of all words in the alphabet~$\sA$ is denoted by~$\sA^*$, which includes the empty word denoted by~$1$, and carries the structure of a monoid with respect to the operation of concatenation. %
As usual, we use~$\sA^+$ to denote $\sA^*\backslash \{ 1\}$. %
The \textit{content} of a word~$\bw$, denoted $\con(\bw)$, is the set of all variables from~$\sA$ that appear in~$\bw$. %
The number of occurrences of a variable~$x$ in a word~$\bw$ is denoted by $\occ(x,\bw)$, so that $\con(\bw)= \{ x\in \sA \,|\, \occ(x,\bw) > 0 \}$. %
A word~$\bw$ is \textit{$n$-limited} if $\occ(x,\bw) \leq n$ for all $x \in \sA$, and a variable~$x$ is \textit{$n$-occurring} in~$\bw$ if $\occ(x,\bw)=n$. %
A variable that is $1$-occurring is said to be \textit{linear} and a word is said to be \emph{linear} if it is $1$-limited. %
%
Reference to specific occurrences of a variable~$x$ in a word~$\bw$ is often required. %
For this purpose, if $\occ(x,\bw)\geq i$, then let~$_ix$ denote the~$i$th occurrence of~$x$ in~$\bw$ from the left. %
We extend this to sequences of occurrences, so that we can say that $_2x\, {}_1z$ is a factor of $xy\underline{xz}zy$ (the pattern of occurrences is underlined), while $_1z\, {}_2y$ is not, even if $zy$ is actually a factor of $xyxzzy$.

An \textit{identity} is an expression $\bw \approx \bw'$ where~$\bw,\bw' \in \sA^*$; we use the phrase \emph{equation} synonymously with identity. %
A monoid~$\bM$ \textit{satisfies} an identity $\bw\approx \bw'$, written $\bM\models \bw\approx \bw'$, if for every substitution $\theta: \sA \to \bM$, the equality $\bw\theta=\bw'\theta$ holds in~$\bM$. %
For any class~$\cK$ of monoids, write $\cK \models \bw \approx \bw'$ to indicate that every $\bM \in \cK$ satisfies $\bw \approx \bw'$. %
The class of all monoids satisfying the identities true on the class $\cK$ is the \emph{variety} generated by $\cK$, which we denote $\mathcal{V}(\cK)$.  By a classical result of Birkhoff, $\mathcal{V}(\cK)$ is also the class of monoids obtained from $\cK$ by taking direct products, submonoids and monoid homomorphisms.  The class of all monoids satisfying a system of monoid identities $\Sigma$ is also a variety, which we denote by $[\![\Sigma]\!]$; note that $\mathcal{V}([\![\Sigma]\!])=[\![\Sigma]\!]$.  A variety is said to be \emph{finitely generated} if it is equal to $\mathcal{V}(\bM)$ for a single finite monoid $\bM$.

For any set~$\Sigma$ of monoid identities, write $\Sigma \models \bw \approx \bw'$ to mean that the class of all monoids satisfying $\Sigma$ also satisfies $\bw \approx \bw'$. %
We write $\Sigma \vdash \bw \approx \bw'$ to mean that there is a \textit{derivation} of $\bw \approx \bw'$ from $\Sigma$: formally, there is a sequence %
\[
\bw=\bu_0, \bu_1, \ldots, \bu_r = \bw' %
\]
of distinct words such that for each $i \in \{0,\ldots,r-1\}$, there exist an identity $\bp_i\approx \bq_i\in \Sigma$, a substitution $\theta_i: \sA \to \sA^*$, and words $\ba_i, \bb_i \in \sA^*$ such that $\{\bu_i,\bu_{i+1}\}=\{\ba_i(\bp_i\theta_i)\bb_i,\ba_i(\bq_i\theta_i)\bb_i\}$. %
Birkhoff's completeness theorem of equation logic states that  $\Sigma\models\bu\approx \bv$ if and only if $\Sigma\vdash \bu\approx \bv$; see Burris and Sankappanavar~\cite[Theorem~14.19]{BS}. %

An \textit{isoterm} for a monoid variety~$\cV$ (or for any monoid generating~$\cV$, or for any system of identities defining~$\cV$) is a word~$\bw$ such that for any other word~$\bw'$, if $\cV \models \bw \approx \bw'$ then $\bw\approx\bw'$ holds in all monoids. %
In other words, once all instances of~$1$ have been removed by applications of $x1 \mapsto x$ and $1x \mapsto x$, the two words~$\bw$ and~$\bw'$ are reduced to identical expressions. %

For any set $\mathscr{X} \subseteq \con(\bw)$, let $\bw[\mathscr{X}]$ denote the result of applying the substitution that fixes all variables in~$\mathscr{X}$ and assigns the value~$1$ to all variables in $\sA \backslash \mathscr{X}$. %
For example, $\bw[\{x\}]=x^{\occ(x,\bw)}$ for any $x\in \con(\bw)$. %
It is convenient to write $\bw[x_1,\ldots,x_r]$ in place of $\bw[\{x_1,\ldots,x_r\}]$. %

Let $\bw\approx \bw'$ be any identity and let~$x$ and~$y$ be distinct variables. %
Then the un\-or\-dered pair $\{x,y\}$ is said to be \textit{stable} in $\bw \approx \bw'$ if $\bw[x,y] = \bw'[x,y]$, and \textit{unstable} otherwise. %
The ordered pair $(x,y)$ is called a \textit{critical pair} in $\bw \approx \bw'$ if there exist $i \leq \occ(x,\bw)$ and $j \leq\occ(y,\bw)$ such that ${_ix}\, {_jy}$ is a factor of~$\bw$ but in~$\bw'$, the $i$th occurrence of~$x$ occurs after the $j$th occurrence of~$y$. %
In this case we also say that the pair $({_ix},{_jy})$ is a \textit{critical occurrence pair}. %
This concept has most relevance when the only identities of interest are \textit{balanced} in the sense that $\occ(z,\bw)=\occ(z,\bw')$ for all $z \in \sA$. %
It is easily seen that any nontrivial balanced identity has unstable critical pairs and unstable critical occurrence pairs. %
The standard method for demonstrating that a set~$\Sigma$ of identities is a basis for a nilpotent monoid~$\bM$ is to first reduce the problem to that of deducing balanced identities, and then show that~$\Sigma$ may be used to reduce the number of unstable pairs in any balanced identity satisfied by~$\bM$.  Usually this involves showing how to remove some critical pair from the set of unstable pairs.  There are only finitely many unstable pairs in $\bu\approx \bv$, so repeating this procedure starting from $\bu$, we eventually arrive at $\bv$, thereby giving a deduction of $\bu\approx \bv$.  We employ this method in the present article to show that a chosen irredundant system is a basis for a certain nilpotent monoid. %

The notion of stability can be extended to words relative to a fixed monoid. %
An unordered pair $\{x,y\}$ is \textit{stable} in a word~$\bw$ with respect to a monoid~$\bM$ if $\bw[x,y]=\bw'[x,y]$ whenever $\bM \models \bw \approx \bw'$. %
It is routinely seen that~$\bw$ is an isoterm for~$\bM$ if and only if every pair of variables of~$\bw$ is stable in~$\bw$ with respect to~$\bM$. %
Moreover, to prove this, it suffices to establish stability for all pairs of variables having a neighboring occurrence in~$\bw$. %

\subsection{Rees quotients of free monoids} %

For any set $\sW \subseteq \sA^*$ of words, the \textit{factorial closure} of~$\sW$ is the set
\[
\sW^\leq=\{\bu \in \sA^* \,|\, \text{$\bu \leq \bw$ for some $\bw \in \sW$}\}, %
\]
which consists of all words that are subwords of words in $\sW$.
The set $I(\sW) = \sA^* \backslash \sW^\leq$ forms an ideal of $\sA^*$, and the corresponding Rees quotient $\sA^*/I(\sW)$ will be denoted by $\bM(\sW)$.   %
For example, the universe of the monoid $\bM(\{xyx\})$ is the set $\{1,x,y,xy,yx,xyx,0\}$.
Clearly, $\bM(\sW)$ is finite if and only if $\sW$ is finite and
in this case, it is more convenient to write $\bM(\bw_1,\ldots,\bw_r)$ instead of $\bM(\{\bw_1,\ldots,\bw_r\})$. %

\begin{remark}
The Rees quotient $\sA^*/I(\sW)$ is more commonly denoted in the literature by $\mathrm{S}(\sW)$; %
see, for example, Jackson~\cite{J00,J01,J05b}, Jackson and Sapir~\cite{JS}, and Lee~\cite{L09,L11,L12a,L13}. %
However, we feel this notation would be better for the Rees quotient $\sA^+\big/\big(\sW^\leq \,\backslash \{1\}\big)$, and as we here work in the strict monoid signature, we take this opportunity to use a better targeted notation.%
\end{remark}

\begin{lem}[Jackson~{\cite[Lemma~3.3]{J05b}}] \label{lem:isoterm} %
Let $\cV$ be any monoid variety and $\sW \subseteq \sA^*$\up. %
Then $\bM(\sW)\in \cV$ if and only if every word in $\sW$ is an isoterm for~$\cV$\up. %
\end{lem}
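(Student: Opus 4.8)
The plan is to work throughout with the canonical epimorphism $\pi\colon \sA^* \to \bM(\sW)$ and the two structural facts about the Rees quotient $\bM(\sW)$: first, $\pi$ is injective on the factorial closure $\sW^{\leq}$, since distinct subwords of words of $\sW$ stay distinct and nonzero in $\bM(\sW)$; second, a product $a_1\cdots a_m$ of nonzero elements of $\bM(\sW)$ is nonzero precisely when the concatenation of their representative words lies in $\sW^{\leq}$, in which case that concatenation \emph{is} the representative of the product. Write $r(a)$ for the representative word in $\sA^*$ of a nonzero $a \in \bM(\sW)$, so that $\pi(r(a)) = a$.

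\emph{Forward direction.} Suppose $\bM(\sW)\in\cV$ and let $\bw\in\sW$ with $\cV\models\bw\approx\bw'$ for some $\bw'\in\sA^*$. Then $\bM(\sW)\models\bw\approx\bw'$; evaluating under the substitution $x\mapsto\pi(x)$ ($x\in\sA$) and using that $\pi$ is a homomorphism gives $\pi(\bw)=\pi(\bw')$. As $\bw\in\sW\subseteq\sW^{\leq}$ we have $\pi(\bw)\neq 0$, so injectivity of $\pi$ on $\sW^{\leq}$ forces $\bw=\bw'$. Hence $\bw$ is an isoterm for $\cV$.

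\emph{Converse.} Assume every word of $\sW$ is an isoterm for $\cV$; since $\cV$ is equationally defined it suffices to show $\bM(\sW)\models\bu\approx\bv$ for every identity $\bu\approx\bv$ of $\cV$. Discarding the trivial case $\sW=\emptyset$, I first establish the auxiliary claim $\con(\bu)=\con(\bv)$: if some $x$ occurred only in $\bv$, substituting $1$ for every variable other than $x$ in $\bu\approx\bv$ would give $\cV\models 1\approx x^{k}$ with $k=\occ(x,\bv)\geq 1$, whence $\cV\models\bw\approx\bw x^{k}$ for any $\bw\in\sW$, contradicting the isoterm property since $\bw$ and $\bw x^{k}$ are distinct words. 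Now fix any substitution $\sigma\colon\sA\to\bM(\sW)$ and write $\bu=x_1\cdots x_m$. If $\bu\sigma\neq 0$, then every $\sigma(x_i)$ is nonzero and, by the second structural fact, the word $\bu\sigma':=r(\sigma(x_1))\cdots r(\sigma(x_m))$ is the representative of $\bu\sigma$ and lies in $\sW^{\leq}$; choose $\bw\in\sW$ with $\bw=\bp\,(\bu\sigma')\,\bq$. Viewing $\sigma'$ as a substitution $\sA\to\sA^*$ (sending variables outside $\con(\bu)$ to $1$), from $\cV\models\bu\approx\bv$ we get $\cV\models\bw\approx\bp\,(\bv\sigma')\,\bq$; since $\bw$ is an isoterm this is an equality of words, so $\bu\sigma'=\bv\sigma'$ in $\sA^*$ by cancellativity. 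Applying $\pi$ and using $\con(\bu)=\con(\bv)$ together with $\pi(r(a))=a$ yields $\bu\sigma=\pi(\bu\sigma')=\pi(\bv\sigma')=\bv\sigma$. If instead $\bu\sigma=0$ while $\bv\sigma\neq 0$, the same argument with $\bu$ and $\bv$ exchanged gives $\bv\sigma=\bu\sigma=0$, a contradiction; hence $\bu\sigma=\bv\sigma$ in every case.

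\emph{Main obstacle.} The only step needing genuine care is the compatibility of the lift $\sigma'$ with $\pi$ on the variables occurring in $\bv$: a priori these need not all lie in $\con(\bu)$, and this is precisely why the content-equality claim $\con(\bu)=\con(\bv)$ must be proved first. Everything else—injectivity of $\pi$ off the ideal, the characterisation of nonzero products in $\bM(\sW)$, and well-definedness of $\sigma'$—is routine bookkeeping about Rees quotients of free monoids.
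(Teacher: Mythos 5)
Your proof is correct: the forward direction via injectivity of the Rees projection off the ideal, and the converse via lifting a substitution into $\bM(\sW)$ to a word substitution, extending the nonzero image $\bu\sigma'$ to a word of $\sW$, and invoking the isoterm property plus cancellation (with the content-equality claim $\con(\bu)=\con(\bv)$ correctly identified as the point needing care). The paper itself gives no proof, citing Jackson~\cite[Lemma~3.3]{J05b}, and your argument is essentially the standard one behind that cited result.
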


\section{A finite monoid with an infinite irredundant identity basis}
Our example is of the form $\bM(\sV)$, where the set $\sV$ is a quite complicated set of~37 words.  The proof takes the following 3 step approach.  
\begin{itemize}
\item[(A)] We present an infinite system $\Sigma$ of identities and show it is {\irr}.  
\item[(B)] We present the set $\sV$, which we show are isoterms for $\Sigma$.  Hence, by Lemma \ref{lem:isoterm} we have $\mathbb{HSP}(\bM(\sV)))\subseteq [\![\Sigma]\!]$.
\item[(C)] We show that $\mathbb{HSP}(\bM(\sV)))= [\![\Sigma]\!]$ by showing that $\Sigma\vdash \bu\approx \bv$ whenever $\bM(\sV)\models \bu\approx \bv$.
\end{itemize}

%
%
\subsection{Step A}
Consider the word
\[
\bw_n:=x_0z_1xyz_2x_1x_0x_2x_1x_3x_2\dots x_{n}x_{n-1}z_1xyz_2x_n.
\]
and let $\bw_n'$ denote the result of switching the position of $x$ and $y$ in this word.    We are going to show that the monoid variety determined by
\[
\Sigma:=\{\bw_n\approx \bw_n'\mid n\geq 2\}\cup\{xt_1xt_2x\approx x^3t_1t_2\approx t_1t_2x^3,x^3\approx x^4\}
\]
 is finitely gen\-er\-ated.  We will make frequent use of the fact that, for $n>1$, \emph{the only subwords of $\bw_n$ that appear more than once in $\bw_n$ are individual letters, and subwords of $z_1xyz_2$}.  This is not true for the word $\bw_1$, and the greater repetition in this word makes the law $\bw_1\approx \bw_1'$ a consequence of those in $\Sigma$: consider the assignment $\theta$ giving all variables $x_i$ the value $1$, the variable $z_1$ the value $x_0z_1$, the variable $z_2$ the value $z_2x_1$, and fixing $x$ and $y$.  Then for any $n>1$ we have $\bw_n\theta=\bw_1$ and $\bw_n'\theta=\bw_1'$.
 \begin{lem}
 The identity system $\Sigma$ is irredundant.
 \end{lem}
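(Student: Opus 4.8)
The plan is to show that no single identity in $\Sigma$ follows from the remaining ones. Write $\Sigma_0:=\{xt_1xt_2x\approx x^3t_1t_2\approx t_1t_2x^3,\ x^3\approx x^4\}$ for the finite part, so that $\Sigma=\Sigma_0\cup\{\bw_n\approx\bw_n'\mid n\geq 2\}$. There are two things to check: first, that each finite axiom in $\Sigma_0$ is not a consequence of the rest of $\Sigma$; second, that for each fixed $m\geq 2$, the identity $\bw_m\approx\bw_m'$ does not follow from $\Sigma\setminus\{\bw_m\approx\bw_m'\}$.

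\emph{Independence of the finite axioms.} For each identity in $\Sigma_0$ I would exhibit a monoid (or a monoid variety, presented by identities) that satisfies every other member of $\Sigma$ but fails the chosen one. Since the $\bw_n\approx\bw_n'$ are balanced identities between $3$-limited words whose non-linear variables occur exactly three times each, they are automatically satisfied in any monoid satisfying, say, $xyx\approx xy\approx yx$ on the relevant variables, or more cheaply in any monoid in which all products of length $\geq 2$ collapse appropriately. Concretely, to kill $x^3\approx x^4$ one takes a monoid where $x^3\neq x^4$ but which is ``flat enough'' that all the other laws hold — for instance a suitable Rees quotient $\bM(\sW)$ chosen via Lemma~\ref{lem:isoterm}; to kill one of the $xt_1xt_2x\approx x^3t_1t_2$ type laws one uses a monoid on which $\bw_n$ and $\bw_n'$ become equal for trivial reasons (e.g. an aperiodic commutative monoid, or a nilpotent monoid of low index) but on which the chosen three-variable law genuinely fails. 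The routine part is verifying the finitely many required satisfactions in each small witness monoid.

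\emph{Independence of $\bw_m\approx\bw_m'$.} This is the heart of the lemma. Fix $m\geq 2$ and let $\Sigma_m:=\Sigma\setminus\{\bw_m\approx\bw_m'\}$. I would argue that $\Sigma_m\not\models\bw_m\approx\bw_m'$ by producing a monoid $\bN_m$ with $\bN_m\models\Sigma_m$ and $\bN_m\not\models\bw_m\approx\bw_m'$; the natural candidate is $\bN_m=\bM(\sW_m)$ for $\sW_m$ a finite set of words chosen so that $\bw_m$ is an isoterm for $\bN_m$ while $\bw_n$ is \emph{not} an isoterm for $\bN_m$ for every $n\neq m$ (so that, by Lemma~\ref{lem:isoterm}, $\bw_n\approx\bw_n'$ being an isoterm identity is not needed — rather, we need each $\bw_n\approx\bw_n'$ with $n\neq m$ to actually hold in $\bN_m$, together with $\Sigma_0$). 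The key structural fact quoted in the excerpt — that for $n>1$ the only repeated subwords of $\bw_n$ are single letters and factors of $z_1xyz_2$ — is exactly what lets one separate $\bw_m$ from the other $\bw_n$: the ``linking pattern'' $x_0\,x_1x_0\,x_2x_1\,x_3x_2\,\cdots\,x_nx_{n-1}\,x_n$ has a length that encodes $n$, so a single word can be an isoterm detecting $\bw_m$ but collapsing all $\bw_n$ with $n\neq m$. The main obstacle, and where I expect the real work to be, is checking that $\bN_m$ does satisfy \emph{all} the other identities of $\Sigma$ simultaneously: one must verify both that every $\bw_n\approx\bw_n'$ with $n\neq m$ holds (using the substitution $\theta$ displayed in the excerpt, or a variant, to reduce $\bw_n,\bw_n'$ to a common value in $\bN_m$, e.g. to the word $\bw_1,\bw_1'$ pattern which is no longer an isoterm), and that the three finite laws in $\Sigma_0$ hold in $\bN_m$ — i.e. that $\bw_m$ together with these words can be arranged as isoterms for $\Sigma_0$, so that $\bM(\sW_m)\models\Sigma_0$. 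Once these verifications are in place, $\bN_m$ witnesses that $\bw_m\approx\bw_m'$ is not derivable from $\Sigma_m$, and combining with the independence of the finite axioms we conclude that $\Sigma$ is irredundant.
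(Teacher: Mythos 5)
Your overall plan (exhibit, for each axiom, a separating monoid satisfying the remaining axioms) is a legitimate semantic route, and it differs in packaging from the paper, which argues syntactically: by Birkhoff's completeness theorem, a derivation of $\bw_m\approx\bv$ with $\bv\neq\bw_m$ from the other axioms would have to begin with a nontrivial application of some identity of $\Sigma$ to $\bw_m$, and the paper shows the only candidate is $\bw_m\approx\bw_m'$ itself. But the two routes converge on the same combinatorial core, and that core is exactly what you leave undone. In your scheme the natural witness is $\sW_m=\{\bw_m\}$, so that $\bM(\bw_m)$ refutes $\bw_m\approx\bw_m'$ (since $\bw_m'$ is not a factor of $\bw_m$) and satisfies the finite identities (both sides collapse to $0$ or to $t_1t_2\theta$ because $\bw_m$ is $2$-limited); the entire difficulty is then to prove that $\bM(\bw_m)\models\bw_n\approx\bw_n'$ for all $n\neq m$, i.e.\ that every substitution $\theta$ with $x\theta\neq1\neq y\theta$ sends $\bw_n$ and $\bw_n'$ to words that are not factors of $\bw_m$. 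That is precisely the analysis the paper carries out: since $z_1xyz_2\theta$ occurs twice in $\bw_n\theta$ and has length at least $2$, the fact that the only repeated factors of $\bw_m$ of length $>1$ are factors of $z_1xyz_2$ forces (after excluding the degenerate case) $z_1xyz_2\theta=z_1xyz_2$, and then the interlocking chain $x_1x_0x_2x_1\cdots x_nx_{n-1}$ pins down $i_0=0,i_1=1,\dots$ and forces $n=m$. You explicitly flag this as ``where I expect the real work to be'' and stop there, without even fixing $\sW_m$; so as written the proposal establishes nothing about the crucial family $\{\bw_n\approx\bw_n'\}$, which is the whole point of the lemma.

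Two further concrete problems. First, your structural description of the $\bw_n$ is wrong: they are $2$-limited words in which \emph{every} variable occurs exactly twice, not ``$3$-limited words whose non-linear variables occur exactly three times each''; since your argument for the independence of the finite axioms leans on that description (and on vague appeals to monoids that are ``flat enough''), it needs to be redone, although this half is genuinely easy (e.g.\ a commutative monoid with $x^3\neq x^4$ handles $x^3\approx x^4$, as all $\bw_n\approx\bw_n'$ are balanced). Second, a small logical slip: for $n\neq m$ it is not enough that $\bw_n$ fail to be an isoterm for the witness; you need $\bw_n\approx\bw_n'$ to \emph{hold} there, which (as above) requires showing both $\bw_n\theta$ and $\bw_n'\theta$ become $0$ whenever $x\theta,y\theta\neq1$ — you do note the stronger requirement later, but it is the unproved step. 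In short: right kind of strategy, but the decisive substitution analysis on which the paper's proof rests is missing.
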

 \begin{proof}
 The identities are very similar to a number of systems shown to be irredundant by the author and Lee in \cite{jaclee}, so we give only a sketch of the details.  First, it is easy to see that the identities in $\{xt_1xt_2x\approx x^3t_1t_2t_3\approx t_1t_2x^3,x^3\approx x^4\}$ cannot be applied nontrivially to those in $\{\bw_n\approx \bw_n'\mid n\in\mathbb{N}\}$ and vice versa: in the forward direction we would need to assign $1$ to $x$, rendering the identities trivial, and in the reverse direction, all letters are $2$-occurring, so cannot cover a linear letter such as $t_1$ and $t_2$.  So it suffices to show that for $n\neq m$ there is no nontrivial deduction $\bw_n\approx \bw_n'\vdash \bw_m\approx \bv$ for some $\bv\neq\bw_m$.  Note that $\{x,y\}$ is the only unstable pair in $\bw_n\approx \bw_n'$, so a nontrivial deduction using  $\bw_n\approx \bw_n'$ requires a substitution $\theta$ in which neither $x\theta$ nor $y\theta$ equals $1$.

For any $k\in\mathbb{N}$ let us write $\bw_k$ as $x_0z_1xyz_2\bu_k z_1xyz_2x_k$, where 
\[
\bu_k=x_1x_0x_2x_1x_3\dots x_{k-1} x_{k-2}x_kx_{k-1}.
\]
Assume that $\theta$ is a substitution with $\bw_n\theta\leq \bw_m$ and with $x\theta\neq 1$ and $y\theta\neq 1$.  Now, 
$z_1xyz_2\theta$ is a subword appearing twice in $\bw_n\theta$ and
$|z_1xyz_2\theta|\geq |xy\theta|\geq 2$.  The only subwords of length more than~$1$ that appear twice in $\bw_m$ are subwords of $z_1xyz_2$.  Hence $z_1xyz_2\theta\leq z_1xyz_2$ which then also gives $\bu_m\leq\bu_n\theta$.  We wish to show that $z_1xyz_2\theta= z_1xyz_2$.  Assume for contradiction, that $z_1$ is not contained in $z_1xyz_2\theta$ (a contradiction in the case of $z_2$ will follow by symmetry).  In this situation, the second occurrence of $z_1$ is covered by $\bu_n\theta$; in particular there is $i\leq n$ such that $z_1\leq x_i\theta$.  Note that $x_i$ must be the rightmost letter in $\bu_n$  not to be assigned the value $1$ by $\theta$.  There are two occurrences of $x_i$ in $\bw_n$ and as the only other occurrence of $z_1$ in $\bw_m$ occurs left of the first occurrence of $x$, so too must the first occurrence of $x_i$ in $\bw_n$ be before the first occurrence of $x$.  Hence $i=0$.  However, as $x_i$ was the rightmost letter in $\bu_n$ not to be assigned the value $1$ (and as $n>1$) we must have $x_j\theta=1$ for all $0<j\leq n$ and $\bu_n\theta=x_0\theta$.  But this is not possible because $\bu_m\leq \bu_n\theta$, and $\bu_m$ does not occur twice in $\bw_m$.  

Hence we have $z_1xyz_2\theta= z_1xyz_2$.  We are now at the situation encountered in \cite{jaclee}.  We sketch the rest of the proof.  For each $j\leq m$, let $i_j$ be such that $x_j\leq x_{i_j}\theta$.  As $x_0$ appears either side of $z_1xyz_2$, and $x_{i_0}\theta$ appears twice in $\bw_n\theta$, it follows that $x_{i_0}$ appears either side of $z_1xyz_2$ in $\bw_n$.  So $i_0=0$.  Then $x_{i_1}$ must have an occurrence before the second occurrence of $x_0$ in $\bw_n$, so that $i_1=1$ also.  Then $x_{i_2}$ must have an occurrence after the second occurrence of $x_0$ and before the second occurrence of $x_1$ so that $i_2=2$ as well.  We can continue in this way until we uncover $i_m=m$ (and it is not possible to satisfy this matching if $n<m$).  Then the second occurrence of $x_m$ in $\bw_n$ must be after the second occurrence of $z_1xyz_2$ in $\bw_n$.  This forces $n=m$, as required.
\end{proof}
\subsection{Step B}
We now identify a series of isoterms for the identities in $\Sigma$.  To start with, observe that the identity system $\{xt_1xt_2x\approx x^3t_1t_2t_3\approx t_1t_2x^3,x^3\approx x^4\}$ is a basis for the variety gen\-er\-ated by $\bM(\{abba,aabb,abab\})$ (see \cite{JS}) while $\Sigma$ is easily seen to be satisfied by $\bM(\{abba,aabb\})$ (as no pair of letters $a,b$ in $\bw_n$ has $\bw_n[a,b]\in\{abba,aabb\}$); so the variety defined by $\Sigma$ lies somewhere between $\mathcal{V}\{\bM(\{abba,aabb\})\}$ and $\mathcal{V}\{\bM(\{abba,abab,aabb\})\}$.  Intuitively, the identities $\bw_n\approx \bw_n'$ appear quite weak, as they allow only one small switch in order of appearance of letters, and even then only when linked variables appear within a very specific pattern.
 The pattern of occurrences
 \[
 x_0xx_1x_0x_2x_1x_3x_2\dots x_nx_{n-1}xx_n
 \]
  is already easily seen to be an isoterm for  $\bM(\{abba,aabb\})$.
 We add new words to the pair $\{abba,aabb\}$ with the following strategy: first we add words to restrict the possible kinds of unstable pairs.  Then we add words that allow these unstable pairs to be linked \textit{only} with variables that appear in the pattern of occurrences that correspond to images of $\bw_n$ under linear substitutions.  (Here by a \textit{linear substitution} we mean one that assigns letters to $1$-limited words. The empty word is $1$-limited and is a valid substitution in the language of monoids.)  

\begin{lem}\label{lem:easy}
Let $\bw$ be a $2$-limited word and $\{x,y\}$ an unstable pair in an identity $\bw\approx \bw'$ satisfied by $\bM(\{xyyx,xxyy\})$.  Then $\bw[x,y]\approx \bw'[x,y]$ is the identity $xyxy\approx yxyx$ or $yxyx\approx xyxy$.
\end{lem}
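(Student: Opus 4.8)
The plan is to analyse the word $\bw[x,y]$, which has the form $x^ay^bx^cy^d\cdots$ (an alternating product of blocks of $x$'s and $y$'s) and is $2$-limited in each of $x$ and $y$, so $\occ(x,\bw)\le 2$ and $\occ(y,\bw)\le 2$. First I would dispose of the degenerate cases: if one of the variables, say $x$, is linear in $\bw$, then $\{x,y\}$ is automatically stable, because a single occurrence of $x$ together with at most two occurrences of $y$ produces a word whose reduct modulo $x1\approx x\approx 1x$ is an isoterm for $\bM(\{xyyx,xxyy\})$ unless $\bw[x,y]$ is already $xyyx$ or $xxyy$ — and one checks directly that $xyyx$ and $xxyy$ are isoterms for $\bM(\{xyyx,xxyy\})$ (they are the defining words, hence isoterms by Lemma~\ref{lem:isoterm} applied to $\bM(\{xyyx,xxyy\})$ itself). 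So we may assume $\occ(x,\bw)=\occ(y,\bw)=2$, and since $\{x,y\}$ is unstable, $\bw\approx\bw'$ is a nontrivial identity on these two variables, hence (after deleting $1$'s) $\bw[x,y]$ and $\bw'[x,y]$ are two distinct words each of which is a $2$-limited word in $x,y$ using each letter exactly twice.

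Next I would enumerate the possibilities. A $2$-limited word in which each of $x,y$ occurs exactly twice, up to the roles of $x$ and $y$, is one of: $xxyy$, $xyyx$, $xyxy$, $xxyyx$ — no, rather one lists the six arrangements of the multiset $\{x,x,y,y\}$: $xxyy$, $xyxy$, $xyyx$, $yxxy$, $yxyx$, $yyxx$. The point is that each of $xxyy$, $xyyx$, $yxxy$, $yyxx$ is an isoterm for $\bM(\{xyyx,xxyy\})$: for $xxyy$ and $xyyx$ this is immediate from Lemma~\ref{lem:isoterm}; for $yyxx$ and $yxxy$ it follows by the symmetry swapping $x$ and $y$ (which $\bM(\{xyyx,xxyy\})$ respects, since $\{xyyx,xxyy\}$ is closed under this swap). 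Therefore if $\bw[x,y]$ were any of these four words, then $\bM(\{xyyx,xxyy\})\models\bw\approx\bw'$ would force $\bw'[x,y]=\bw[x,y]$, contradicting instability. Hence $\bw[x,y]\in\{xyxy,yxyx\}$, and the same argument applied to $\bw'$ gives $\bw'[x,y]\in\{xyxy,yxyx\}$; since $\bw[x,y]\neq\bw'[x,y]$, the identity $\bw[x,y]\approx\bw'[x,y]$ is $xyxy\approx yxyx$ or $yxyx\approx xyxy$, as claimed.

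The one genuine input needed is the computation that $xxyy$, $xyyx$ (and hence their $x\leftrightarrow y$ mirrors) are isoterms for $\bM(\{xyyx,xxyy\})$. This is exactly Lemma~\ref{lem:isoterm} in the direction ``$\bM(\sW)\in\cV$ implies every word of $\sW$ is an isoterm for $\cV$'', applied with $\cV=\mathcal{V}(\bM(\{xyyx,xxyy\}))$ and $\sW=\{xyyx,xxyy\}$, since trivially $\bM(\sW)\in\mathcal{V}(\bM(\sW))$. The only mild obstacle is bookkeeping in the degenerate case where some variable is linear in $\bw$: there one must observe that $\bw[x,y]$ is then $xy$, $yx$, $xxy$, $xyx$, $yxx$, $xyy$, $yxy$, or $yyx$ (the linear-in-one-variable words of relevant length), and that \emph{all} of these are isoterms for $\bM(\{xyyx,xxyy\})$ — indeed any word that is linear in at least one of its two variables is an isoterm for $\bM(\{xyyx,xxyy\})$, because deleting that variable collapses the word to a power of the other, which fixes everything — so $\{x,y\}$ could not be unstable. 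I expect this case analysis to be the most tedious part, but it is entirely routine; the substantive content is the short list of four ``forbidden'' arrangements and the appeal to Lemma~\ref{lem:isoterm}.
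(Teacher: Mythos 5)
Your skeleton is the paper's: force each of $x,y$ to occur exactly twice, list the six arrangements of the multiset $\{x,x,y,y\}$, note that $xxyy$, $xyyx$, $yxxy$, $yyxx$ are isoterms, and conclude that only $xyxy\approx yxyx$ can survive. However, your treatment of the degenerate case is not sound as written. You justify it by the principle that any word linear in one of its two variables is an isoterm for $\bM(\{xyyx,xxyy\})$ ``because deleting that variable collapses the word to a power of the other, which fixes everything''. Fixing both one-variable projections does not fix a two-variable word: $xyy$, $yxy$, $yyx$ all have the same projections, and indeed the whole lemma exists because $xyxy$ and $yxyx$ agree on all projections. The principle is also false as stated: $\bM(\{xyyx,xxyy\})$ satisfies $xy^3\approx yxy^2$ (for any substitution with $y\neq 1$ both sides evaluate to zero, since no cube and no word of the form $bab^2$ with $b\neq 1$ is a factor of $xyyx$ or $xxyy$), yet $xy^3$ is linear in $x$. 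What you actually need is that the specific $2$-limited words you list are isoterms, and this requires a genuine check: all of them except $xyx$ and $yxy$ are, up to swapping the two letters, factors of $xyyx$ or $xxyy$ and hence isoterms, while $xyx$ and $yxy$ are not factors and need a short substitution argument (e.g.\ $y\mapsto yy$ sends $xyx$ to the nonzero element $xyyx$ but $xxy$ and $yxx$ to $xxyy$ and to $0$ respectively, so neither identity holds). This short list of isoterms is exactly what the paper's proof opens with.

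A second, smaller gap: in the main case you assert that $\bw'[x,y]$ also uses each of $x$ and $y$ exactly twice and then ``apply the same argument to $\bw'$''. But $\bw'$ is not assumed $2$-limited, so this is not automatic, and without it you cannot pin down $\bw'[x,y]$ (a priori it might have three occurrences of a variable, or one). The fix is the paper's first sentence: $x$ and $xx$ are isoterms (factors of $xxyy$), so deleting $y$ from the satisfied identity $\bw[x,y]\approx\bw'[x,y]$ forces $\occ(x,\bw'[x,y])=\occ(x,\bw[x,y])$, and likewise for $y$; only then is $\bw'[x,y]$ one of the six arrangements and the four isoterms among them force $\bw'[x,y]$ to be the other of $xyxy$, $yxyx$. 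With these two repairs, both routine, your argument coincides with the paper's.
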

\begin{proof}
First note that $xx$ is an isoterm, as are $xy$, $xxy$, $xyx$ and $yxx$ (all from the single word $xyyx$).  So the number of occurrences of $x$ and of $y$ must be exactly~$2$.  As $xyyx, yxxy,xxyy,yyxx$ are isoterms, the last remaining possible nontrivial identity is $xyxy\approx yxyx$ (or $yxyx\approx xyxy$).
\end{proof}

Let $\sU$ denote the set of words
\[
\{xyyx,xxyy,xtyxy,xytxy,xyxty,xyzyxz,
zxyzyx
\}.
\]
\begin{lem}\label{lem:Uisoterms}
The following are isoterms for $\bM(\sU)$\up:
\[
xyzxzy, 
xyxzzy, 
xyxzyz.
\]
\end{lem}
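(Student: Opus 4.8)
The plan is to invoke the criterion from the preliminaries: each of the three words $\bw$ in question is an isoterm for $\bM(\sU)$ as soon as every pair of variables having a neighbouring occurrence in $\bw$ is shown to be stable in $\bw$ with respect to $\bM(\sU)$. Each of these words is $2$-limited with $\con(\bw)=\{x,y,z\}$ and every variable occurring exactly twice, and I would first note that any $\bw'$ with $\bM(\sU)\models\bw\approx\bw'$ inherits these properties (substitute $1$ for all but one variable; since $xx$ is a factor of $xxyy\in\sU$ and hence $v^2$ is an isoterm for every variable $v$, and since the empty word is trivially an isoterm, this pins down the content and occurrence counts of $\bw'$). Moreover, as $\{xyyx,xxyy\}\subseteq\sU$, the monoid $\bM(\{xyyx,xxyy\})$ is a homomorphic image of $\bM(\sU)$, so $\bM(\{xyyx,xxyy\})\models\bw\approx\bw'$ and Lemma~\ref{lem:easy} applies: a neighbouring pair $\{a,b\}$ of $\bw$ is automatically stable unless $\bw[a,b]$ is alternating, in which case $\bw'[a,b]$ is the reverse of $\bw[a,b]$. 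The work therefore reduces to the neighbouring pairs with an alternating projection: $\{x,y\}$ and $\{x,z\}$ for $xyzxzy$; only $\{x,y\}$ for $xyxzzy$; and $\{x,y\}$ together with $\{y,z\}$ for $xyxzyz$.

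\textbf{The words $xyxzzy$ and $xyxzyz$.} For these I would argue directly, with no reference to the longer words of $\sU$. Suppose $\bw'\neq\bw$. By the reduction above, each non-alternating neighbouring pair of $\bw$ has the same projection in $\bw'$ as in $\bw$, while each alternating one has either the same or the reversed projection; and not all projections can stay the same, since the three two-variable projections of a $2$-limited word on three doubly-occurring variables fix the relative order of any two of its six occurrences and hence determine the word. It thus suffices to run through the (at most four) remaining combinations: for each, list the six occurrences of $\bw'$ and record the order relations forced by its three projections. In every case these relations close up into a cycle $u<\dots<u$, which is impossible; so no such $\bw'$ exists.

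\textbf{The word $xyzxzy$.} The cleanest observation is that $xyzxzy$ is obtained from $zxyzyx\in\sU$ by the bijective relabelling $x\mapsto y$, $y\mapsto z$, $z\mapsto x$; since $zxyzyx$ is an isoterm for $\bM(\sU)$ (Lemma~\ref{lem:isoterm}, applied to $\bM(\sU)\in\mathcal{V}(\bM(\sU))$) and isoterm status is relabelling-invariant, $xyzxzy$ is an isoterm for $\bM(\sU)$. Alternatively one may proceed as in the previous paragraph: the partial-order analysis for $\{x,y\}$ and $\{x,z\}$ leaves exactly one surviving candidate, $\bw'=yzxzyx$, and $\bM(\sU)\models xyzxzy\approx yzxzyx$ is refuted by the relabelling $x\mapsto z$, $y\mapsto x$, $z\mapsto y$, which converts this identity into $zxyzyx\approx xyzyxz$, a nontrivial identity between two distinct words of $\sU$ (both isoterms for $\bM(\sU)$).

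The routine part of all this is the partial-order bookkeeping. The single step that calls for a genuine idea is the elimination of the candidate $\bw'=yzxzyx$ attached to $xyzxzy$, and this succeeds precisely because both $zxyzyx$ and $xyzyxz$ were placed into $\sU$ — equivalently, because $xyzxzy$ is itself a relabelled word of $\sU$. I expect this to be the only real obstacle, and it dissolves as soon as the relabelling is spotted. (Note that the words $xtyxy$, $xytxy$, $xyxty$ of $\sU$ play no role in this particular lemma.)
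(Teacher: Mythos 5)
Your proposal is correct and follows essentially the same route as the paper: the first word is handled by the same relabelling of $zxyzyx\in\sU$, and for $xyxzzy$ and $xyxzyz$ you use Lemma~\ref{lem:easy} to reduce to flipping the alternating two-variable projections and then derive a positional contradiction from the stable projections ($xxzz$, $yzzy$), which is exactly the paper's argument, merely phrased as an order cycle rather than as the third projection being forced to a non-matching isoterm such as $yyzz$ or $xxyy$. The extra bookkeeping (content and occurrence counts of $\bw'$, the homomorphic-image justification for applying Lemma~\ref{lem:easy}, and the alternative refutation of the candidate $yzxzyx$) is sound but not needed beyond what the paper already does.
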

\begin{proof}
The first word is identical  up to a change of
letter names to the word $zxyzyx\in \sU$.  For the second word
$\bw:=xyxzzy$, observe that $\{x,z\}$ is stable (deletion of $y$
yields the isoterm $xxzz$) and $\{y,z\}$ is stable (deletion of $x$
yields $yzzy$).  Thus the only possible unstable pair is $\{x,y\}$.
But by Lemma~\ref{lem:easy}, if $xyxzzy\approx \bw'$ were a
nontrivial identity satisfied by $\bM(\sU)$ then $\bw'[x,y]=yxyx$.  This
is impossible because the first occurrence of $z$ must come before
the second occurrence of $y$ and after the second occurrence of $x$.
So $\{x,y\}$ is stable also.

Finally, for $\bw:=xyxzyz$ observe from Lemma~\ref{lem:easy} that
$\{x,z\}$ is stable, so that if $xyxzyz\approx \bw'$ is a satisfied
identity, then $\bw'[x,z]=xxzz$.  By Lemma~\ref{lem:easy}, if
$\{x,y\}$ is unstable, then  $\bw'[x,y]=yxyx$, making
$\bw'[y,z]=yyzz$, an isoterm, and contradicting the assumption
that $\bw\approx \bw'$ is satisfied.  Similarly if $\{y,z\}$ is unstable,
then Lemma~\ref{lem:easy} gives $\bw'[y,z]=zyzy$, giving
$\bw'[x,y]=xxyy$, also an isoterm and a contradiction.  Thus all
pairs of letters are stable and $xyxzyz$ is an isoterm.
\end{proof}
\begin{lem}\label{lem:adjacent}
Let $\bu\approx \bv$ be a $2$-limited identity satisfied by $\bM(\sU)$ and let $(x,y)$ be a critical pair in $\bu$.  Then $\bu$ can be written as $\bu_1xy\bu_2xy\bu_3$ \up(or reverse\up) for some possibly empty words $\bu_1,\bu_2,\bu_3$ in which $\bu_2$ contains no letters that are linear in $\bu$.
\end{lem}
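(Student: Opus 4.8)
The plan is to argue by contradiction: suppose $(x,y)$ is a critical pair in $\bu$, so that ${_ix}\,{_jy}$ is a factor of $\bu$ for some $i,j$ but the corresponding occurrences appear in the reverse order in $\bv$. Since $\bu\approx\bv$ is $2$-limited, each of $x,y$ occurs once or twice. First I would dispose of the degenerate possibilities: if both $x$ and $y$ are linear in $\bu$, then $\{x,y\}$ is an unstable pair with $\bu[x,y]=xy$, $\bv[x,y]=yx$, contradicting that $xy$ is an isoterm for $\bM(\sU)$ (it is a subword of several words in $\sU$). If exactly one of them, say $y$, is linear while $x$ occurs twice, then $\bu[x,y]$ has the form $xyx$, $yxx$ or $xxy$, all isoterms (subwords of $xyyx\in\sU$, cf.\ Lemma~\ref{lem:easy}), again forcing stability of $\{x,y\}$ and contradicting that the occurrence order is reversed. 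Hence both $x$ and $y$ are $2$-occurring in $\bu$, and by Lemma~\ref{lem:easy} the only way $\{x,y\}$ can be unstable is $\bu[x,y]=xyxy$, $\bv[x,y]=yxyx$ (up to the left--right reversal allowed by the statement), so in particular $\bu$ restricted to $\{x,y\}$ is the factor pattern ${}_1x\,{}_1y\,{}_2x\,{}_2y$ and the critical occurrence pair is $({}_1x,{}_1y)$ — the two occurrences of the factor $xy$ that we want to isolate.

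Next I would insert names for the blocks: write $\bu=\bu_1\,xy\,\bu_2\,xy\,\bu_3$, where $\bu_1$ is the prefix up to the first $x$, the displayed $xy$'s are the consecutive occurrences coming from the $xyxy$ pattern, $\bu_2$ is the block strictly between them, and $\bu_3$ is the suffix after the second $y$. The whole content of the claim now reduces to: \emph{no variable occurring in $\bu_2$ is linear in $\bu$}. So suppose $t$ is linear and $t\in\con(\bu_2)$. Since $t$ is linear, its single occurrence lies strictly between ${}_1y$ and ${}_2x$ in $\bu$. I would then apply Lemma~\ref{lem:easy} (or direct isoterm arguments) to the $2$-limited identity $\bu\approx\bv$ restricted to the three letters $\{x,t,y\}$, i.e.\ consider $\bu[x,t,y]$ and $\bv[x,t,y]$. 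Because $t$ sits between the two $xy$'s, $\bu[x,t,y]=xytxy$. I would check that this word is one of the generators in $\sU$ — indeed $xytxy\in\sU$ — hence it is an isoterm for $\bM(\sU)$; therefore $\bv[x,t,y]=xytxy$ as well. But the equality $\bv[x,t,y]=xytxy$ forces, in $\bv$, the first occurrence of $x$ to precede the first occurrence of $y$, contradicting $\bv[x,y]=yxyx$. This contradiction shows $\bu_2$ contains no linear letters, completing the proof.

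The only subtlety — and the place I expect to spend the most care — is making sure the case analysis on where exactly $t$'s (unique) occurrence falls is complete and that in each placement the resulting three-letter restriction of $\bu$ lands on a known isoterm of $\bM(\sU)$. A priori, a linear letter of $\bu_2$ occurs between ${}_1y$ and ${}_2x$, giving pattern $xytxy$; but one should also confirm there is no accounting slippage in the left--right-reversed case (where $\bu=\bu_1\,yx\,\bu_2\,yx\,\bu_3$ and the relevant generator is $xtyxy$ or a symmetric variant), and that the generators $xtyxy$, $xytxy$, $xyxty$ were chosen in $\sU$ \emph{precisely} to kill every such placement. In other words, the real work is the bookkeeping that confirms $\sU$ was designed so that every three-letter restriction $xy{\cdot}t{\cdot}xy$-type pattern arising from a linear letter nestled inside the critical block is an isoterm, so that Lemma~\ref{lem:easy} applied to the pair $\{x,y\}$ inside that restriction yields the contradiction. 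Everything else is a routine application of the isoterm machinery and Lemma~\ref{lem:easy} already set up in the excerpt.
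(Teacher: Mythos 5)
There is a genuine gap. Your decomposition $\bu=\bu_1\,xy\,\bu_2\,xy\,\bu_3$ is not something you are entitled to write down: the combination of Lemma~\ref{lem:easy} (giving $\bu[x,y]=xyxy$) with criticality only yields that \emph{one} of the two occurrence pairs is adjacent, say that ${}_1x\,{}_1y$ is a factor of $\bu$; it says nothing about what sits between ${}_2x$ and ${}_2y$. A priori $\bu$ has the form $\bu_1xy\bu_2x\bu_3'y\bu_3$ with $\bu_3'$ possibly nonempty, and showing that $\bu_3'$ must be empty is the harder half of the lemma — indeed it is where the paper spends most of its effort. The argument there splits on a letter $z$ occurring in $\bu_3'$: if $z$ is linear one uses the isoterm $xyxty\in\sU$ (note: \emph{not} $xytxy$, which handles $\bu_2$); if $z$ is $2$-occurring, then since ${}_1x\,{}_1y$ is a factor the restriction $\bu[x,y,z]$ must be one of $zxyxzy$, $xyzxzy$, $xyxzzy$, $xyxzyz$, and each of these is an isoterm — the first is $xyzyxz\in\sU$ up to renaming, the remaining three are exactly the content of Lemma~\ref{lem:Uisoterms}. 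Each case contradicts the instability of $\{x,y\}$, so $\bu_3'$ is empty. Your proposal never uses Lemma~\ref{lem:Uisoterms} at all, which is a strong sign this step has been skipped: your closing paragraph worries only about where a \emph{linear} letter can fall, but the dangerous letters here are the $2$-occurring ones interleaving with the second $xy$.

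The parts you do prove are fine and match the paper: the reduction via Lemma~\ref{lem:easy} to $\bu[x,y]=xyxy$, $\bv[x,y]=yxyx$ (including the degenerate linear cases, which Lemma~\ref{lem:easy} already absorbs), and the argument that a linear $t\in\con(\bu_2)$ would give $\bu[x,t,y]=xytxy$, an isoterm of $\sU$, contradicting $\bv[x,y]=yxyx$. But as it stands your write-up proves only the ``no linear letters in $\bu_2$'' clause under the unjustified assumption that both occurrences of $xy$ appear as factors, so the lemma is not yet established.
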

\begin{proof}
As $(x,y)$ is unstable, Lemma \ref{lem:easy} shows that we may assume without loss of generality that $\bu[x,y]\approx \bv[x,y]$ is the identity $xyxy\approx yxyx$.  Because $(x,y)$ is critical, there are $i,j\leq 2$ with $_ix\, {}_jy$ occurring as a subword of $\bu$ but with $_ix$ occurring after $_jy$ in $\bv$.  This forces either $i=j=1$ or $i=j=2$.  Without loss of generality we may assume that $i=j=1$ giving $\bu= \bu_1xy\bu_2x\bu_3'y\bu_3$.  We show that $\bu_3'$ is empty and that $\bu_2$ has no letters that are linear in $\bu$.  The second statement follows immediately from the fact that $xytxy$ is an isoterm.  So now assume for contradiction that $\bu_3'$ is not empty: containing an occurrence of some letter $z$ say.  As $xyxty$ is an isoterm we have that $z$ is not linear in $\bu$.  So---given $_1x\, {}_1y$ is a subword of $\bu$---the possible pattern of occurrences of $z$ are as follows:
\[
zxyxzy, xyzxzy, xyxzzy, xyxzyz.
\]
The first word is identical up to a change of letter names to the isoterm $xyzyxz\in \sU$.  The remaining three words in the list are isoterms by Lemma \ref{lem:Uisoterms}.  Thus in each case we obtain a contradiction with the instability of
$(x,y)$.  
\end{proof}

Now we must add new words to $\sU$ that will restrict the pattern of
occurrence of other variables to very specific forms: those that can
be obtained from $\bw_n$ by linear substitutions.  The goal is to
build on Lemma~\ref{lem:adjacent} by ensuring that all letters
linked to $x$ (equivalently, $y$) will be covered by some linear
substitution from $\bw_n$.

The family of words we add will be obtained by interleaving
relatively short patterns amongst $xyxy$.  We will use
Lemma~\ref{lem:adjacent} to simplify to a case where the pattern of
occurrences of the two variables $x$ and $y$ is of the form
$\bu_1xy\bu_2xy\bu_3$.  Because of this, \textit{we only need to
consider interleavings that  keep $_ix\,{}_iy$ a subword for
$i=1,2$}.  

The following list of required isoterms might appear quite daunting,
and the reader might initially be doubtful that any careful check
can be made that the list is complete relative to allowing only
$\Sigma$ and its consequences.  We stress here that no check of
completeness is required at this stage.  All that needs to be
checked is that each given word is in fact an isoterm with respect
to $\Sigma$.  The groupings 1--9 relate to the pattern of letters
that are being interleaved around $xy\dots xy$, and is useful for
reference during the main proofs.  Similarly the choice of letter
names is arbitrary: we use $x,y$ consistently, but the order of
appearance of letters $a,b,c,d,e$ is chosen to match (where
possible) the order that certain letters appear in the main proof:
this makes it easier to check that certain patterns are isoterms.  
\begin{enumerate}
\item 
$atxyaxy$, and reverse,
\item 
$abaxybxy$, $abxyaxyb$, and their reverses,
\item 
$abbxyaxy$, $abxybaxy$,
$xyabbaxy$, and their reverses,

\item 
$abxyacbcxy$, 
\item 
$axybcbacxy$, $axybbcacxy$, $axybbaccxy$, $axybbacxyc$, and their reverses,
\item 
$axybcabxyc$,
\item $axybcabdcxyd$, 
$abxyacbddxyc$, 
$abxyacbdxycd$,
$axybcabdcdxy$, and reverses
\item $xyabcadbecdexy$,
\item
$xydcdabcaebexy$, $dxycdabcaebexy$, $dxycdabcaebxye$, and their reverses.
\end{enumerate}
For reference purposes we use the notation ($n$.$i$) to denote the $i$th word of kind $n$.  For example, isoterm (3.2) is the word $abxybaxy$.  
The words 3.3, 6.1, 7.3, 8.1, 9.1, 9.3, are identical up to a change of letter names to their own reverse and so there are really only $2\times 20-6=34$ words in the list.  We also need the words in $\sU$, however all of these except  $xxyy,xyzyxz,
zxyzyx$ occur as subwords of the new words of kinds 1--9.  We let  $\sV$ denote the $37$ word set consisting of $xxyy,xyzyxz,
zxyzyx$ along with the 34 words just listed.

Before verifying that these words are isoterms for $\Sigma$ it is convenient to introduce some further concepts targeted at analysing $2$-limited words.

 A \textit{block} of a word $\bw$ will be a maximal subword $\bu\leq \bw$ such that for all variables $x$, if $x\in \con(\bu)$, then all occurrences of $x$ in $\bw$ lie within $\bu$.  Note that the intersection of two blocks is again a block, so that for any subword $\bu\leq \bw$ there is a smallest block containing $\bu$.  Two letters $x,y$ will be said to \textit{interlock} in $\bw$ if $\bw[x,y]\in\{xyxy,yxyx\}$.  
 A letter $y$ is said to be \textit{linked} to $x$ if there is a sequence of letters $x=x_0,x_1,\dots,x_n$ such that for each $0\leq i<n$ the letter $x_i$ is interlocked with $x_{i+1}$ and either $x_n$ is interlocked to $y$ or $\bw[x_n,y]=x_nyyx_n$ or $\bw[x_n,y]=x_nyx_n$.  
 Note that in the case that $x_n$ is interlocked to $y$, then $x$ is linked to $y$ also.  Also note that when $n=0$ this means that $\bw[x,y]=xyyx$ (in this case $y$ is linked to $x$, but $x$ is not linked to $y$).  It is routine to verify that the smallest block containing a subword $\bu\leq \bw$ is the subword consisting of all letters that are linked to letters in $\bu$.  Note that  if $\bv$ and $\bw$ are $2$-limited words without linear letters and $\bw$ is itself a block, then $\bv\theta\leq \bw$ implies $\bv\theta=\bw$ for every substitution $\theta$.
 
\begin{lem}\label{lem:V}
All words in $\sV$ are isoterms with respect to $\Sigma$, so that $\bM(\sV)\models \Sigma$.
\end{lem}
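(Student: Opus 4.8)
\textbf{Proof proposal for Lemma~\ref{lem:V}.}

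The plan is to verify, for each of the 37 words $\bw\in\sV$, that every pair of variables with a neighbouring occurrence in $\bw$ is stable with respect to $\Sigma$; by the remark at the end of the Preliminaries this suffices to conclude that $\bw$ is an isoterm for $\Sigma$, and then Lemma~\ref{lem:isoterm} gives $\bM(\sV)\models\Sigma$. The three words $xxyy$, $xyzyxz$, $zxyzyx$ are already known to be isoterms for $\bM(\sU)$ (they lie in $\sU$), and since $\Sigma$ is satisfied by $\bM(\{abba,aabb\})$ and all of $\sU$'s words are isoterms for that variety, it is enough to check the 34 listed interleaved words. For these, the key structural observation is that each listed word $\bw$ is $2$-limited and, apart from the distinguished pair $x,y$, every other variable is $2$-occurring, so no variable is linear; moreover each listed word, with the exception of the short ones in groups 1--3 which must be handled separately, is itself a single block (every variable's occurrences are ``spread'' by the interlocking chain through $x,y$).

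The core of the argument is that the only identities in $\Sigma$ that can possibly be applied nontrivially to a $2$-limited word are the laws $\bw_n\approx\bw_n'$: the laws $xt_1xt_2x\approx x^3t_1t_2\approx t_1t_2x^3$ and $x^3\approx x^4$ involve a $3$-occurring variable, so any substitution instance applicable as a factor of a $2$-limited word would have to send $x$ to $1$, making it trivial. Thus I would argue that if $\bw\approx\bw'$ is a nontrivial identity with $\bw\in\sV$ and $\Sigma\models\bw\approx\bw'$, then $\bw\approx\bw'$ is balanced and $\bw'$ is obtained from $\bw$ by a derivation using only the $\bw_n\approx\bw_n'$. Any single application of $\bw_n\approx\bw_n'$ requires a substitution $\theta$ with $x\theta\neq 1\neq y\theta$ and with $\bw_n\theta$ occurring as a factor of $\bw$; since $z_1xyz_2\theta$ then occurs twice in that factor and has length $\geq 2$, and since in each word of $\sV$ the only length-$\geq 2$ factors occurring twice are controlled (one checks this word-by-word), such a substitution cannot exist. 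Concretely, for each $\bw\in\sV$ and each pair $\{u,v\}$ of neighbouring variables I would show directly that $\bw$ restricted to $\{u,v\}$, i.e. $\bw[u,v]$, is one of the patterns $uv$, $uuv$, $uvu$, $vuu$, $uuvv$, $vvuu$, $uvvu$, $vuuv$, $uvuuv$, $\dots$ already forced to be stable (the building blocks from Lemmas~\ref{lem:easy}--\ref{lem:Uisoterms} reappear as sub-patterns), or else that assuming $\{u,v\}$ unstable forces, via Lemma~\ref{lem:easy}, a specific reordered pattern $\bw'[u,v]$ that is incompatible with the relative positions of a third variable $w$ whose pattern with $u$ and $v$ is pinned down — exactly the style of argument used in the proof of Lemma~\ref{lem:Uisoterms}.

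The main obstacle is simply the bookkeeping: there are 34 words and for each one several candidate neighbouring pairs, so the proof is a long but mechanical case analysis. To keep it manageable I would exploit three economies. First, symmetry: a word and its reverse give the same isoterm statement, cutting the list roughly in half, and within a word left--right symmetry often pairs up cases. Second, the fact that deleting a variable from an isoterm (for a variety closed under the relevant operations) again yields a word all of whose pairs must be checked only among survivors — so the small isoterms of groups 1--3 and $\sU$ act as a library of already-verified sub-patterns, and for a pair $\{u,v\}$ in a long word one first deletes all other variables and recognises the result in that library; only when the result is the ambiguous pattern $uvuv$ does one invoke the third-variable positional argument. Third, the grouping 1--9 is organised precisely so that words in the same group differ only in the interleaving details, so the positional obstructions recur with minor variations. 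I expect the genuinely delicate cases to be the longest words, groups 8 and 9, where the interlocking chain from $x$ to $y$ passes through four or five intermediate letters and one must track the cyclic-type constraints carefully; there the argument is that instability of any link in the chain would, by Lemma~\ref{lem:easy}, reverse a block of the pattern in a way that collides with the fixed position of the next letter along the chain, contradicting that the displayed word is a block.
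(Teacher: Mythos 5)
Your second paragraph does contain the paper's actual mechanism (2-limitedness rules out the laws involving $x^3$, and a nontrivial application of $\bw_n\approx\bw_n'$ forces a repeated factor $z_1xyz_2\theta$ of length at least~$2$), but the concrete plan you then commit to --- pair-by-pair stability verified through Lemma~\ref{lem:easy}, Lemma~\ref{lem:Uisoterms} and a ``library'' of small isoterms, ``exactly the style of argument used in the proof of Lemma~\ref{lem:Uisoterms}'' --- does not prove the statement, because those lemmas concern the wrong variety. They establish isoterm facts for $\bM(\{xyyx,xxyy\})$ and $\bM(\sU)$; to transfer ``isoterm for $\bM(\sU)$'' to ``isoterm for $\Sigma$'' you need $\bM(\sU)\models\Sigma$, which is not available before Lemma~\ref{lem:V} (it is essentially an instance of what is being proved, since the words of $\sU$ sit inside those of $\sV$). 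The only monoid known at this stage to satisfy $\Sigma$ is $\bM(\{abba,aabb\})$, and its isoterms are far too weak for your third-variable contradictions: for example $\bM(\{abba,aabb\})\models xyzyxz\approx zxyzyx$ (every substitution keeping the alternating pair nonempty sends both sides to $0$, and deleting $z$ gives identical words), so $xyzyxz$ and $zxyzyx$ are \emph{not} isoterms for that variety, and neither is $xytxy$. Hence your claim that ``all of $\sU$'s words are isoterms for that variety'' is false, your dispatch of $xxyy,\ xyzyxz,\ zxyzyx$ collapses, and the groups 1--3 words cannot be certified first by projections into that library so as to bootstrap the longer ones. (A smaller slip: group~1 words such as $atxyaxy$ contain the linear letter $t$, contrary to your structural claim that all non-$x,y$ letters are $2$-occurring.)

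What the paper does instead, and what your sketch skips, is a purely derivational check for each $\bv\in\sV$: since the only length-$\geq 2$ factor occurring twice in $\bv$ is $xy$, a substitution $\theta$ with $\bw_n\theta\leq\bv$ and $x\theta\neq 1\neq y\theta$ must have $xy\theta=xy$, hence $x\theta=x$, $y\theta=y$ and every other letter of $\bw_n$ mapped to a word of length at most~$1$; the remaining --- and genuinely word-specific --- task is then to show that no factor of $\bv$ is a copy of $\bw_n$ with the other letters deleted or renamed (the paper's worked case, isoterm 9.3, is exactly this, forcing $x_0\theta=d$, $z_2\theta=1$, $x_1\theta=c$ and then the contradiction $x_2\theta=da$). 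Your proposal jumps from ``the repeated factors are controlled'' to ``such a substitution cannot exist,'' but that implication is false in general and the embedding check cannot be replaced by pairwise/library reasoning: the paper itself notes that $bxycbddxyc$ (and similarly $axybaxyb$) admits an embedded copy of some $\bw_n$ fixing $x,y$, so such words are not isoterms for $\Sigma$ even though all their small projections look as harmless as those of the listed words. Detecting why the 37 listed words avoid such embeddings while near-identical words do not is precisely the content of the lemma, and it is missing from your argument.
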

\begin{proof} (Sketch.)
We have already argued this in the case of the words in $\sU$.  As all words in $V$ are $2$-limited, only identities from $\Sigma$ of the form $\bw_n\approx\bw_n'$ can possibly apply nontrivially to them.  
Next, a quick inspection of the words in $\sV\backslash \sU$ reveals that in every case the only subwords that appear more than once in any given word are individual letters and $xy$.  
Let $\bv$ be any one of the 37 words listed above, and assume for contradiction that there is $n$ and a substitution $\theta$ such that $\bw_n\theta\leq\bv$ and $\bw_n\theta\neq \bw_n'\theta$.  So neither $x\theta$ nor $y\theta$ is assigned the value $1$.  
But then $xy\theta$ is a subword of length more than $1$ and with two occurrences, and as $\bv$ contains exactly one such subword (namely $xy$), we have $xy\theta=xy$, which gives $x\theta=x$ and $y\theta$.  
As all other letters in $\bw_n$ are also $2$-occurring in $\bw_n$, this also gives $|x_i\theta|\leq 1$ and $|z_i\theta|\leq 1$.  Thus, $\bw_n\theta$ is obtained by deleting letters from $\bw_n$, and possibly changing the name of some letters (but not $x$ and $y$).  It is very easy (in each individual case) to see that this is not true of $\bv$.  
This task is easier to see by inspection than to write carefully, so we restrict to one ostensibly ``difficult'' case, and leave the rest for the reader to convince themselves in the other cases.

Let $\bv=dxycdabcaebxye$ (isoterm 9.3) and assume for contradiction that some subword of this is obtained from $\bw_n$ by a substitution $\theta$ that fixes $x$, $y$ and either deletes or renames all others.   As the smallest block containing $x$ is all of $\bv$ (a feature also shared by all other choices of $\bv\in \sV\backslash \sU$), it follows that $\bw_n\theta=\bv$.  Then $\theta(x_0z_1)=d$, but as $d$ does not appear immediately left of $_2x$ we cannot have $z_1\theta=d$.  So $x_0\theta=d$.  Similarly, we cannot have $c$ a prefix of $z_2\theta$ as $c$ does not have an occurrence after $_2y$.  So $z_2\theta=1$ and we must have $x_1\theta=c$ (the only letter between $_1z_2$ and $_2x_0$).  Then $x_2\theta=da$ (as $x_2$ is the only letter between $_2x_0$ and $_2x_1$).  This is a contradiction, as $da$ occurs just once in $\bv$ but $x_2$ occurs twice in $\bw_n$.
\end{proof}

\subsection{Step C}
Lemma shows that $\Sigma$ is irredundant, and Lemma \ref{lem:V} shows that $\Sigma$ is satisfied by $\bM(\sV)$.  We now show that every identity satisfied by $\bM(\sV)$ follows from $\Sigma$, which completes the proof of the main result.
\begin{thm}\label{thm:irredundant}
$\Sigma$ is an  identity basis for the identities of $\bM(\sV)$.
\end{thm}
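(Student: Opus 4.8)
The plan is to prove $\bM(\sV)\models \bu\approx\bv$ implies $\Sigma\vdash\bu\approx\bv$, which together with Lemma~\ref{lem:V} and the inclusion $\mathbb{HSP}(\bM(\sV))\subseteq[\![\Sigma]\!]$ gives $\mathbb{HSP}(\bM(\sV))=[\![\Sigma]\!]$, and combined with the irredundancy lemma completes the main theorem. Following the standard method described in the Preliminaries, I would first reduce to balanced identities: using the laws $xt_1xt_2x\approx x^3t_1t_2\approx t_1t_2x^3$ and $x^3\approx x^4$ together with the fact that the isoterms in $\sV$ are all $2$-limited, one shows $\Sigma$ proves any identity satisfied by $\bM(\sV)$ is equivalent (modulo $\Sigma$) to a balanced, $2$-limited one. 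Concretely, any variable occurring $\geq 3$ times (or with matching $\geq 3$ counts on both sides, or occurring $3$ times on one side and differently on the other --- all of these are forced since $\bM(\sV)$ satisfies $x^3\approx x^4$ and the non-$2$-limited isoterm behaviour) can be pushed to the front/back and absorbed; a variable appearing $n\neq m$ times on the two sides would contradict $\bM(\sV)\models\bu\approx\bv$ since $\bM(\sV)$ has isoterms like $xx$. So I reduce to showing: if $\bu\approx\bv$ is balanced and $2$-limited and holds in $\bM(\sV)$, then $\Sigma\vdash\bu\approx\bv$.

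For the balanced $2$-limited case, the strategy is the one flagged in the Preliminaries: reduce the number of unstable pairs, one critical pair at a time, using only consequences of $\Sigma$. Take any critical pair $(x,y)$ in $\bu$ with $\bu\approx\bv$ satisfied by $\bM(\sV)$. By Lemma~\ref{lem:easy} (applicable since $\bM(\sV)\models\bM(\{xyyx,xxyy\})$'s isoterms), $\bu[x,y]\approx\bv[x,y]$ is $xyxy\approx yxyx$; by Lemma~\ref{lem:adjacent} (applicable since $\sU\subseteq\sV$, up to renaming), $\bu$ can be written $\bu_1xy\bu_2xy\bu_3$ with $\bu_2$ containing no letter linear in $\bu$. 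The heart of the argument is then a careful combinatorial analysis of which letters can be linked to $x$: I would show, using the isoterms of kinds 1--9 in $\sV$, that the block $\bB$ of $\bu$ containing $x$ (equivalently the set of letters linked to $x$) must, together with its pattern of occurrences inside $\bu$, be exactly the image $\bw_n\theta$ of some $\bw_n$ under a linear substitution $\theta$ fixing $x$ and $y$, for the appropriate $n$. This is where the elaborate list of $37$ isoterms does its work: each isoterm of kind $k$ rules out one way in which the interleaving around $xy\ldots xy$ could deviate from the specific "staircase" pattern $x_1x_0x_2x_1x_3x_2\cdots$ appearing in $\bw_n$. Once $\bB=\bw_n\theta$ (and symmetrically the corresponding block in $\bv$ is $\bw_n'\theta$), the law $\bw_n\approx\bw_n'$ applied through $\theta$ swaps the two occurrences of $x$ relative to $y$ inside that block, removing the critical pair $(x,y)$ from the unstable set without creating new unstable pairs outside $\bB$ (since $\bB$ is a block, letters outside it have their relative order unchanged). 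Iterating, since there are only finitely many unstable pairs, we reach $\bv$, giving $\Sigma\vdash\bu\approx\bv$.

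The main obstacle --- and the step I expect to occupy most of the proof --- is establishing that the block containing $x$ must literally be a linear-substitution image of some $\bw_n$, rather than some other $2$-limited block admitting the unstable pair $(x,y)$. This requires an inductive "peeling" argument: starting from the adjacent occurrences $_1x\,{}_1y$ and $_2x\,{}_2y$ given by Lemma~\ref{lem:adjacent}, one identifies the letters nearest to these occurrences, shows via specific isoterms in $\sV$ that they must appear in exactly the staircase configuration, then moves outward one layer at a time, at each stage invoking the relevant isoterm of kind 2--9 to forbid the alternative configurations, and finally using an isoterm of kind 8 or 9 to pin down how the "$z_1xyz_2$" blocks attach at the two ends. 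Care is needed because a single letter of $\bu$ could be the $\theta$-image of several letters of $\bw_n$ concatenated, or of none (assigned $1$); the $2$-limitedness of $\bu$ together with the structure of $\bw_n$ (its only repeated proper subwords being inside $z_1xyz_2$) constrains this heavily, but the bookkeeping is delicate. I would organise this as a sequence of claims, each handling one "layer" of the staircase, mirroring the matching argument already carried out in the irredundancy proof and in~\cite{jaclee}, and present the forbidden-configuration checks by appeal to the explicit isoterm list rather than re-deriving each one.
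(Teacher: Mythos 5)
Your plan follows essentially the same route as the paper's own proof: reduce to balanced $2$-limited identities via the $x^3$-laws, use Lemmas~\ref{lem:easy} and~\ref{lem:adjacent} to put a critical pair into the form $\bu_1xy\bu_2xy\bu_3$, restrict to the block containing $x$, and then run an inductive, isoterm-driven analysis showing that this block is the image of some $\bw_n$ under a substitution fixing $x,y$, so that $\bw_n\approx\bw_n'$ removes the critical pair and iteration finishes the derivation; this is precisely the paper's Step~C argument (Reduction~1--3, Claims~1--8, Induction Cases~1--3), with your ``delicate bookkeeping'' remark corresponding to the paper's Reductions~2 and~3 handling repeated subwords and common prefixes/suffixes.
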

\begin{proof}
This proof is more involved than the previous ones, so we outline the overall structure.  Our goal is to show that an arbitrary identity $\bu\approx \bv$ of $\bM(\sV)$ can be proved from $\Sigma$.  First we make a series of simplifications to the possible identities $\bu\approx \bv$ we need to consider.  At that point it will suffice to show how to us $\Sigma$ to remove a single critical pair from the set of unstable pairs of $\bu\approx \bv$.  We then show that, subject to the simplifications, $\bu$ is identical to $\bw_n\phi$ for some $n$ and substitution $\phi$.  The substitution $\phi$ is constructed inductively.

Let $\bu\approx \bv$ be a nontrivial identity of $\bM(\sV)$.  As $xx$ is an isoterm, any letter that appears in $\bu$ more than twice also appears in $\bv$ more than twice. Using the identities $xt_1xt_2x\approx xxxt_1t_2\approx t_1t_2xxx\approx  t_1t_2xxxx$ we can move all occurrences of letters with more than $2$ occurrences to the left to derive $\bu\approx z_1^3\dots z_k^3\bu'$ and $\bv\approx z_1^3\dots z_k^3\bv'$ where $\{z_1,\dots,z_k\}=\{z\in \con(\bu)\mid \occ_z(\bu)>2\}$ and $\bu'\approx \bv'$ is $2$-limited.  Thus there is no loss of generality in assuming from the start that both $\bu$ and $\bv$ are $2$-limited and balanced.

Let $(x,y)$ be a critical pair in $\bu\approx \bv$.  By
Lemma~\ref{lem:adjacent}, we may assume that $\bu=
\bu_1xy\bu_2xy\bu_3$ and $\bv= \bv_1y\bv_2x\bv_3y\bv_4x\bv_5$
(with $\bu_1,\bu_2,\bu_3,\bv_1,\bv_2,\bv_3,\bv_4,\bv_5$ possibly
empty and with no letter appearing in $\bu_2\bv_2\bv_3\bv_4$ being
linear in $\bu$).  We are going to show that $\Sigma$ can be applied to obtain
$\bu\approx \bu'$ such that $\bu'\approx \bv$ has one fewer unstable
pair: we remove the pair $(x,y)$.  More precisely, we shows that
$\Sigma\vdash \bu_1xy\bu_2xy\bu_3\approx \bu_1yx\bu_2yx\bu_3$.  Once
this is achieved, the proof will be complete: the number of unstable
pairs has been reduced by $1$, and as there are only finitely many
unstable pairs, repeating the process eventually provides a complete
proof of $\bu\approx \bv$.

We now make a series of simplifications, which give us greater control over the form of $\bu$.

{\bf Reduction 1.} \textit{It suffices to assume that $\bu$ is the
smallest block containing $x$.}  Equivalently: all letters in $\bu$
are linked to $x$.  Indeed, if there was some smaller block $\bu'$
of $\bu$ containing $x$ and we are able to apply $\Sigma$ to remove
the critical pair $(x,y)$ from $\bu'$ without adding further
unstable pairs, then the critical pair $(x,y)$ was also removed from
$\bu$ without adding further unstable pairs.

We are now going to work toward the construction of a substitution $\phi$ that has $\bw_n\phi=\bu$, and with $x\phi=x$, $y\phi=y$.  Then $\bw_n'\phi$ differs from $\bu$ only in the order of appearance of $x$ and $y$, as required.  The number $n$ will be determined later (only in particular cases is the value of $n$ unique).

{\bf Reduction 2}.  \textit{It suffices to assume that the final
letter of $\bu_1$ is not the final letter of $\bu_2$, and the first
letter of $\bu_2$ is not the first letter of $\bu_3$.  This will
force $z_1\phi=z_2\phi=1$.} To establish this, let $\bu_{1,2}$
be the largest suffix of $\bu_1$ that is also a suffix of $\bu_2$
(write $\bu= \bu_{1,1}\bu_{1,2}$), and let $\bu_{3,1}$ be the
largest prefix of $\bu_3$ that is also a prefix of $\bu_2$ (and
write $\bu_3= \bu_{3,1}\bu_{3,2}$).  Because $\bu$ is
$2$-limited, the prefix $\bu_{3,1}$ of $\bu_2$ and the suffix
$\bu_{1,2}$ of $\bu_2$ do not overlap.  So we may write $\bu=
\bu_{1,1}\bu_{1,2}xy\bu_{3,1}\bu_{2}'\bu_{1,2}xy\bu_{3,1}\bu_{3,2}$
for some word $\bu_2'$ whose first letter is distinct from the first
letter of $\bu_{3,2}$ and whose final letter is distinct from the
final letter of~$\bu_{1,1}$.  Moreover, because $\bu$ is $2$-limited
we have
\begin{multline*}
\con(\bu_{1,2})\cap\con(\bu_{1,1}xy\bu_{3,1}\bu_{2}'xy\bu_{3,1}\bu_{3,2})\\
=
\con(\bu_{3,1})\cap\con(\bu_{1,1}\bu_{1,2}xy\bu_{2}'\bu_{1,2}xy\bu_{3,2})=\varnothing.
\end{multline*}
So assigning $1$ to all letters in $\con(\bu_{1,2})\cup\con(\bu_{3,1})$ and leaving other letters unchanged will produce $\bu':=\bu_{1,1}xy\bu_{2}'xy\bu_{3,2}$, which is a word satisfying the assumptions being asked of $\bu$ in Reduction 2.  If we manage to find $n$ and $\phi$ with $x\phi=x$, $y\phi=y$ and $\bw_n\phi=\bu'$, then we must have $z_1\phi=z_2\phi=1$ because the final letter of $\bu_{1,1}$ is distinct from the final letter of $\bu_2'$ and the first letter of $\bu_2'$ is distinct from the first letter of $\bu_{3,2}$.  Let $\phi'$ be the substitution that agrees with $\phi$ on all letters except for $z_1$ and $z_2$, and has $z_1\phi'=\bu_{1,2}$ and $z_2\phi'=\bu_{3,1}$.  Then $\bw_n\phi'=\bu$ (with $x\phi'=x$ and $y\phi'=y$) as required.

{\bf Reduction 3.} It suffices to consider the case where the word
$\bu$ has the following property: \textit{the only nonempty subwords
of $\bu$ that occur twice in $\bu$ are individual letters and the
subword $xy$}.  To see why we can make this simplification, assume
that $\bu$ does not have this property.  Consider any maximal
subword $\bu'$ of $\bu$ that appears twice in $\bu$ but is not $xy$
itself.  Because of Reduction 2, $\bu'$ does not contain $x$ and
does not contain $y$.  Because $\bu$ is $2$-limited, the two
occurrences of $\bu'$ do not overlap, and moreover, if $\bu''$ is
some other maximal subword of $\bu$ occurring twice in $\bu$, then
$\bu''$ does not overlap with $\bu'$.  For each such maximal subword
$\bu'$ appearing twice in $\bu$, let $x_{\bu'}$ denote the first
letter of $\bu'$.  Note that if $\bu'$ is a single letter, then it
is just the letter $x_{u'}$.  Let $\bar{\bu}$ be the result of
replacing all maximal $2$-occurring subwords $\bu'$ of $\bu$ (except
for $xy$) by $x_{\bu'}$.   So $\bar{\bu}$ is of the form
$\bar{\bu}_1xy\bar{\bu}_2xy\bar{\bu}_3$, where
$\bar{\bu}_1,\bar{\bu}_2, \bar{\bu}_3$ are subject to the same
assumptions as $\bu_1,\bu_2,\bu_3$ and $\bar{\bu}$ has the property
in the statement of Reduction 3.  If we can reverse the order of
$xy$ in $\bar{\bu}$ then we are done, because
$\bar{\bu}_1xy\bar{\bu}_2xy\bar{\bu}_3\approx
\bar{\bu}_1xy\bar{\bu}_2xy\bar{\bu}_3\vdash
\bu_1xy\bu_2xy\bu_3\approx \bu_1yx\bu_2yx\bu_3$ using the
substitution that fixes $x$ and $y$ but has $x_{\bu'}\mapsto \bu'$.

We now continue our search for $\phi$ (with $z_1\phi=z_2\phi=1$, as required by Reduction 2).  We proceed by establishing a number of claims.  In each case, the proof is by contradiction, showing that if the claim were false, then $\{x,y\}$ could not be unstable in $\bu$.

\begin{claim}\label{claim1} If $a$ interlocks with $x$ and $b$ interlocks with $a$, then $b$ interlocks with $x$ also.
\end{claim}
\begin{proof}[Proof of Claim~\ref{claim1}.]
Assume that $a$ interlocks with $x$ and $b$ interlocks with $a$ but not~$x$.  So $\bu[a,b,x,y]\in\{babxyaxy,abxyaxyb,xyaxybab,bxyaxyba\}$.  But, up to a change of letter names these are isoterms (2.1), (2.2) and their reverses (respectively), which would contradict the instability of $\{x,y\}$ in $\bu$.
\end{proof}

\begin{claim}\label{claim2} Every letter in $\con(\bu)\backslash\{x,y\}$ is $2$-occurring and has at least one of its occurrences in $\bu_2$.
\end{claim}
\begin{proof}[Proof of Claim~\ref{claim2}.]
By Reduction 1, all letters in $\bu$ are linked to $x$.  Next observe that no letter in $\bu_2$ can be linear in $\bu$ because $xytxy$ is an isoterm (for example, $xyaxy$ is a subword of  isoterm (1.1)).  Now assume that $a$ is a letter occurring in $\bu_1$.  As $a$ is linked to $x$ there is a chain of successively interlocked letters leading from $x$ to $a$.  By Claim~\ref{claim1} we have that either $a$ itself is interlocked with $x$ or that $a$ is not interlocked with $x$ but is linked to $x$ via a single letter $b$ interlocked with $x$.  In the first instance $a$ has an occurrence in $\bu_2$ and is $2$-occurring as required.  In the second instance we have that $\bu[a,b,x,y]\in\{baxybxy, baaxybxy\}$ or reverse (the arrangements $abaxybxy$ and $baxybxya$ are impossible due to Claim~\ref{claim1}).  However, $baxybxy$ is  isoterm (1.1), while $baaxybxy$ is isoterm (3.1); these would contradict the instability of $\{x,y\}$ in $\bu$.
\end{proof}

\begin{claim}\label{claim3}
$|\bu_1|\leq 1$ and $|\bu_3|\leq 1$.
\end{claim}
\begin{proof}[Proof of Claim~\ref{claim3}.]
Up to symmetry we may consider just $|\bu_1|$.  Assume for contradiction that $|\bu_1|>1$, and let the $2$-letter suffix of $\bu_1$ be $ab$.  Now $a$ and $b$ are distinct letters, because of Claim~\ref{claim2} and the fact that $\bu$ is $2$-limited.  By Claim~\ref{claim2} again, the second occurrence of both $a$ and $b$ is in $\bu_2$.  Because $abxybaxy$ is  isoterm~(3.2), we must have $\bu[a,b,x,y]=abxyabxy$.  By Reduction~3, there is some letter $c$ that appears between ${{}_2}a$ and ${{}_2}b$; moreover we may choose~$c$ so that it has an occurrence immediately to the left of $_2b$.  Now, if $c$ has an occurrence in $\bu_1$, then as $ab$ is a suffix of $\bu_2$ we have $\bu[a,b,c,x,y]=cabxyacbxy$.  However, $caxyacxy$ is also isoterm~(3.2), a contradiction with the instability of $\{x,y\}$ in~$\bu$.  Thus we may assume that $c$ has no occurrence in $\bu_1$.  So $\bu[b,c,x,y]$ is one of $bxyccbxy$,  $bxycbcxy$ or $bxycbxyc$.  We treat each as a separate subcase.\\
{\bf Subcase 1. }  $\bu[b,c,x,y]=bxyccbxy$ leads to contradiction.  \\
By Reduction~3  (and the fact that $_1b$ is adjacent to $_1x$ in $\bu$) there is some further letter $d$ with an occurrence between $_2b$ and~$_2x$.  As $dbxybdxy$  and $xydccdxy$ are isoterms (3.2) and (3.3), respectively, and $c$ occurs in $\bu$ immediately left of $_2b$, the only viable placements of $d$ in $\bu[b,c,d,x,y]$ are
$bxycdcbdxy$,
$bxyccbddxy$ and
$bxyccbdxyd$, which are all equivalent up to a change of letter names to isoterms of kind $5$ (a contradiction):
\begin{itemize}
\item $bxycdcbdxy$ is equivalent to $axybcbacxy$ isoterm (5.1);
\item $bxyccbddxy$ is equivalent to $axybbaccxy$ isoterm (5.3);
\item $bxyccbdxyd$ is equivalent to $axybbacxyc$ isoterm (5.4).
\end{itemize}
{\bf Subcase 2.}
$\bu[b,c,x,y]=bxycbcxy$ leads to contradiction.\\
  Recall the placement of $a$ and obtain  $\bu[a,b,c,x,y]=abxyacbcxy$, which is  isoterm~(4.1) (a contradiction).\\
  {\bf Subcase 3.}
Finally, consider
$\bu[b,c,x,y]=bxycbxyc$.\\
Again, let $d$ be a letter appearing between $_2b$ and $_2x$, which exists by Reduction~3 and the fact that $_1b\, {}_1x$ is a subword of $\bu$.  Now $dbxybdxy$ and $xycdxydc$ are isoterm (3.2) and its reverse.  So, given that $_1c$ occurs immediately left of $_2b$ and~$_1b$ occurs immediately left of $_1x$, the possible placements for $d$ in $\bu[b,c,d,x,y]$ are $bxydcbdxyc, bxycbddxyc, bxycbdxycd$.  The first of these is  isoterm (6.1), a contradiction.  The second and third possibilities are not themselves isoterms relative to~$\Sigma$ (for example, if $x_0\mapsto b$, $x\mapsto x$, $y\mapsto yc$, $x_2\mapsto d$ and all other letters are assigned~$1$, then $\bw_3$ maps onto $bxycbddxyc$ and $\bw_3\approx \bw_3'$ yields $bxycbddxyc\approx bycxbddycx$).  However, they do turn out to be isoterms once the position of letter $a$ is recalled.  With $a$ reintroduced we have $abxyacbddxyc$ and $abxyacbdxycd$, which are isoterms~(7.2) and~(7.3) respectively.

Thus in every case, subcase 3 leads to contradiction, as required.  Thus all three of the possible cases  lead to the desired contradiction and the claim is established.
\end{proof}

Claim~\ref{claim3} shows that $\bu_1$ is either empty or a single letter.  In either case we will eventually be forced to select $\phi(x_0)=\bu_1$, and so to aid the notation, we now let $a_0$ (either a single letter or the empty word) denote $\bu_1$.  A dual argument from the right would identify a unique letter $a_n$ (or empty word) appearing after $_2y$.  The value of $n$ is yet to be determined.

\begin{claim}\label{claim4} If $a,b\in\con(\bu)\backslash\{x,y\}$, then $\bu(a,b)\notin\{abba, baab\}$.
\end{claim}
\begin{proof}[Proof of Claim~\ref{claim4}.]
Without loss of generality we may assume that $a$ has its first occurrence in $\bu$ before $b$, so that we need only ensure that $\bu[a,b]\neq abba$.  Assume for contradiction that $\bu[a,b]=abba$.  Claim~\ref{claim3} shows that at most one letter has an occurrence before $_1x$ and at most one letter has an occurrence after $_2y$.  On the other hand, $xyabbaxy$ is  isoterm (3.3), so at least one occurrence of $a$ is outside of the subword $\bu_2$.  Now $\bu[a,x,y]=axyxya$ is impossible as it violates Claim~\ref{claim2}.  Thus the only possibilities are $\bu[a,x,y]=axybbaxy$ or $\bu[a,x,y]=xyabbxya$.  By symmetry it suffices to show that  $\bu[a,b,x,y]=axybbaxy$ leads to contradiction.

By Claim~\ref{claim3}, $_1a\,{}_1x$ is a subword.  Then by Reduction~3, there must be some letter~$c$ occurring between $_2a$ and $_2x$.
The possible configurations for the placement of~$c$ are
$axycbbacxy$, $axybcbacxy$, $axybbcacxy$, $axybbaccxy$ and $axybbacxyc$.  The first of these deletes to $xycbbcxy$, which is isoterm (3.3).  The remaining four configurations are isoterms (5.1), (5.2), (5.3) and (5.4) respectively.  Thus all cases
result in a contradiction with the instability of $\{x,y\}$,
completing the proof of the claim.
\end{proof}

\begin{claim}\label{claim5} If $a_0$ is nonempty and there is a letter $b$ between $_1y$ and $_2a_0$, then the second occurrence of $b$ is to the right of the second occurrence of~$a_0$. Moreover, if this letter $b$ has an occurrence after $_2y$, then $\bu=a_0xyba_0xyb$.  \up(The right dual statement refers to $a_n$ and any letter appearing between $_1a_n$ and $_2x$.\up)
\end{claim}
\begin{proof}[Proof Claim~\ref{claim5}.]
The first statement is an immediate consequence of Claim~\ref{claim3} and Claim~\ref{claim4}.  For the second statement, first let us use the notation $a$ for $a_0$.  Now consider the situation where $_2b$ occurs after $_2y$.  So $\bu[a,b,x,y]=axybaxyb$.  Now, by Claim~\ref{claim3}, if there is a further letter in $\bu$, it must have both of its occurrences between $_1y$ and $_2x$.  The only placement of this letter, $c$, that does not violate Claim~3 or Claim~\ref{claim4} is $axycbacxyb$.  But this is equivalent to isoterm~(6.1).   Thus no further letters exist, and the proof is complete.
\end{proof}

It is not clear at this point of the proof that if there is a letter $b$ satisfying the assumptions of Claim~\ref{claim5}, then it is unique.  In fact it will be unique, but at this stage it is sufficient to let $a_1$ denote the choice of $b$ in Claim~\ref{claim5} whose first occurrence immediately follows $_1y$ (if such a letter exists; otherwise $a_1$ can be the empty word).  We reuse the letter $b$ for other purposes during proofs below.

\begin{claim}\label{claim6}
If $a_0$ and $a_1$ are nonempty, then $a_0xya_1a_0$ is a prefix of $\bu$.  \up(The right dual statement states that if $a_{n}$ and $a_{n-1}$ are nonempty then $a_na_{n-1}xya_n$ is a suffix of $\bu$.\up)
\end{claim}
 \begin{proof}[Proof of Claim~\ref{claim6}.] For the duration of the proof we rename $a_0$ and $a$ and $a_1$ as~$b$.    Assume for contradiction that $c$ is some third letter adjacent (to the right) of $_1b$ and before $_2a$.  If there is a choice, then we select $c$ to have an occurrence immediately right of $_1b$.  By Claim~\ref{claim5}, the second occurrence of $c$ is to the right of $_2a$.  Also, as $b$ and $c$ are distinct, the final statement of Claim~\ref{claim5} shows that the second occurrence of both $b$ and $c$ are left of the second occurrence of $x$.  As $xybccbxy$ is isoterm (3.3), the second occurrence of $c$ is also to the right of $_2b$.  So we have  $\bu[a,b,c,x,y]=axybcabcxy$.  However, as $_1b\, {}_1c$ is a subword, Reduction~3 implies that there is a letter $d$ with an occurrence between $_2b$ and $_2c$.  If $d$ has an occurrence to the right of $_2y$, then we have $\bu[a,b,c,d,x,y]= axybcabdcxyd$.  This is isoterm (7.1), a contradiction.  If $d$ has an occurrence after $_2c$ but before $_2x$ then $\bu[a,b,c,d,x,y]= axybcabdcdxy$, which is isoterm (7.4).   So, by Claim~\ref{claim4} we must search for the first occurrence of $d$ left of $_1c$.  But $_1c$ is adjacent to $_1b$, which is adjacent to $_1y$, so no placement is possible without contradicting the fact that $a$ is the only letter left of $_1x$ (by Claim~2).  Thus $b$ (if it exists) is the only letter with an occurrence between $_1y$ and $_2a$.  This completes the proof of Claim~\ref{claim6}.
\end{proof}

By symmetry, we may likewise work from the right and identify a unique (or empty) letter $a_n$ occurring after $_2y$, and the unique (or empty) letter $a_{n-1}$ whose second occurrence occurs between $_1a_n$ and $_2x$.  The precise value of $n$ is determined later.  The case where $n=1$ has already been encountered and completed: it coincides with the situation where $_2a_1$ occurs to the right of $_2y$, which is completed  by the second statement of Claim~\ref{claim5}.

Now we consider the case where $a_0$ is empty. If $\bu_2$ is empty also then $\bu=xyxy$ and we are done.  Otherwise $\bu_2$ is nonempty and we let $a_1$ denote the leftmost letter in $\bu_2$.

The arguments up to now may be considered the base case in an inductive construction of $\phi$ (with $x\phi=x$, $y\phi=y$ and $z_1\phi=z_2\phi=1$).  For the general {\bf Induction Hypothesis}, assume that for some $k\geq 1$ we have identified $a_0,\dots,a_k$ and defined $x_j\phi=a_j$ for each $j=0,\dots,k$ such that
\begin{itemize}
\item[(i)] each of $a_0,\dots,a_k$ is either a letter in $\con(\bu)$ or is empty and
\item[(ii)] $x_0xyx_1x_0x_2x_1\dots x_{k-1}x_{k-2}x_{k}x_{k-1}\phi$ is a prefix of $\bu$, and
\item[(iii)] there is no $j<k$ such that both $a_j$ and $a_{j+1}$ are empty.
\end{itemize}
Let $\bu'$ denote the prefix $x_0xyx_1x_0x_2x_1\dots x_{k-1}x_{k-2}x_{k}x_{k-1}\phi$ and $\bu''$ be such that $\bu=\bu'\bu''$.  The following observations are consequences of Induction Hypothesis~(ii) and the fact that each letter with an occurrence in $\bu$ has exactly two occurrences in $\bu$: each of $a_0,\dots,a_{k-1}$ has no  occurrences in $\bu''$; if nonempty, the letter $a_k$ has precisely one  occurrence in $\bu''$; any letter in $\con(\bu'')\backslash\{x,y,a_k\}$ has no occurrences in $\bu'$.  We use these frequently throughout the remainder of the argument.

In order to define $a_{k+1}$ there are three cases to consider according to whether none or exactly one  of $a_k,a_{k+1}$ are empty (by Induction Hypothesis (iii) it is not possible that both are nonempty).  It is useful to first establish the following facts, which explain the role of the most technical isoterms of kinds 8 and 9.

\begin{claim}\label{claim7}
Let $a,b,c$ be distinct letters in $\con(\bu)\backslash \{x,y\}$.  Then neither $_1a\,{}_1b\,{}_1c$ nor $_2a\,{}_2b\,{}_2c$ are subwords of $\bu$.
\end{claim}
\begin{proof}[Proof of Claim~\ref{claim7}]
By symmetry we may assume for contradiction that $_1a\,{}_1b\,{}_1c$
is a subword of $\bu$.  By Claim~\ref{claim3} we have
$_1a\,{}_1b\,{}_1c$ a subword of $\bu_2$.  Also, by
Claim~\ref{claim4} we must have $\bu[a,b,c]=abcabc$.  By
Claim~\ref{claim3} again, if any of $a,b,c$ have a second occurrence
right of $_2y$, then it is $c$ only.  However,  this violates the
right dual to Claim~\ref{claim6}, which would have $_1c$ adjacent to
$_2b$.  Hence each of $a,b,c$ have both occurrences within $\bu_2$.
Now, by Reduction~3, there must a be a letter $d\notin\{x,y\}$ having
an occurrence between $_2a$ and $_2b$.  As each of $a,b,c$ is
$2$-occurring, we have $d\notin \{a,b,c\}$.  By Claim~\ref{claim4}
we have $\bu[d,c]\neq cddc$, so that either the first occurrence of
$d$ is left of $_1c$ or the second occurrence of $d$ is right of
$_2c$.  As $_1a\,{}_1b\,{}_1c$ is a subword of $\bu$, if the first
occurrence of $d$ was left of $_1c$ then it is also left of $_1a$
giving $\bu[a,d]=daad$ which contradicts Claim~\ref{claim4}.  Thus
$\bu[a,b,c,d]=abcadbcd$.

Similarly by Reduction 3 there must be a letter $e\notin\{x,y\}$
having an occurrence between $_2b$ and $_2c$ (and as each of
$a,b,c,d$ is $2$-occurring, we have $e\notin \{a,b,c,d\}$).  Again
Claim~\ref{claim4} gives the second occurrence of $e$ right of $d$
so that $\bu[a,b,c,d,e]=abcadbecde$.  By Claim~\ref{claim3}, only
$e$ can possibly have an occurrence after $_2y$.  If $_2e$ is left
of $_2x$ then $\bu[a,b,c,d,e,x,y]=xyabcadbecdexy$, which is  isoterm (8.1), a contradiction.  If $_2e$ is $a_n$ (that is, right
of $_2y$), then $\bu[c,d,e,x,y]=xycdecdxye$, contradicting the right
dual of Claim~\ref{claim6}.
\end{proof}

\begin{claim}\label{claim8}
Let $a,b,c$ be distinct letters in $\con(\bu)$ with $a,b\notin \{x,y\}$.  If $_1a\,{}_1bc$ is a subword, then $c=a$.  Dually, if $b,c\notin\{x,y\}$ and $a\, {}_2b\,{}_2c$ is a subword of $\bu$ then $a=c$.
\end{claim}
\begin{proof}[Proof of Claim~\ref{claim7}]
Up to symmetry it suffices to assume for contradiction that
$_1a\,{}_1bc$ is a subword of $\bu$ (where $a,b\notin\{x,y\}$).  We
will assume that this is the leftmost instance of such a pattern in
$\bu$.  If $c=a$ we are done, so assume that $c\neq a$.  Trivially,
$c\neq y$.  By Claim~\ref{claim6}, we cannot have $c=x$.   If $c=b$
then $\bu[a,b]=abba$, contradicting Claim~\ref{claim4}.  Thus $c$ is
some letter in $\con(\bu)\backslash\{a,b,x,y\}$.  By Claim
\ref{claim4} we have $\bu[a,b]=abab$.  By Claim~\ref{claim7} the
given occurrence of $c$ is not the first occurrence of $c$, and as
$\bu[c,a]\neq caac$ and $\bu[c,b]\neq cbbc$ we have
$\bu[a,b,c]=cabcab$.  By Claims~\ref{claim5} and~\ref{claim6}, all
three of $a,b,c$ have both their occurrences in $\bu_2$.  By
Claim~\ref{claim7} and the fact that $_1a\,{}_1b$ is a subword, we
must have a letter $d$ with an occurrence $_id$ immediately
following $_1c$ and before $_1a$.  Now, the letter following $_id$
cannot be $c$ (as $_2c$ is right of $_1b$).  Then $i=2$, because
$a,b,c$ were chosen to be the leftmost instance of a failure of
Claim~\ref{claim8}.  So $\bu[a,b,c,d]=dcdabcab$.

By Reduction 3 and the fact that $_1a\,{}_1b$ is a subword, there must be some letter $e$ with an occurrence between $_2a$ and $_2b$.  By Claim~\ref{claim4}, if both occurrences of $e$ are left of $_2b$, then the first occurrence of $e$ must be left of $_1b$.  But $_1a\,{}_1b$ is a subword of $\bu$, so that the first occurrence of $e$ is also left of $_1a$, giving $\bu[a,e]=eaae$, contradicting Claim~\ref{claim4}.  Thus the second occurrence of $e$ is right of~$_2b$, giving $\bu[a,b,c,d,e]=dcdabcaebe$.  But then $\bu[a,b,c,d,e,x,y]$ is one of
\[
xydcdabcaebexy, dxycdabcaebexy, xydcdabcaebxye, dxycdabcaebxye.
\]
The first, second and fourth of these are isoterms (9.1), (9.2) and (9.3) respectively. The third word is the reverse of isoterm (9.2).
\end{proof}


%

Now we may present the inductive step.

\begin{indcase}\label{indcase1} Both $a_{k-1}$ and $a_k$ are nonempty. In this case $_1a_k\, _2a_{k-1}$ is a subword by inductive hypothesis \up(ii\up).  Let $c$ denote the letter immediately right of $_2a_{k-1}$.  If\up:
\begin{enumerate}
\item[(a)] $c=a_k$, then we define $a_{k+1}$ to be empty\up;
\item[(b)] $c=x$, then the second occurrence of $a_k$ is right of $_2y$ and we set $n=k$ and have $\bw_n\phi=\bu$, completing the inductive construction of $\phi$\up;
\item[(c)] $c\notin\{a_k,x\}$ then we define $a_{k+1}:=c$ and observe that  $_2a_{k-1}\,{}_1a_{k+1}\,{}_2a_{k}$ is a subword of $\bu$.
\end{enumerate}
\end{indcase}
\begin{proof}[Proof of Induction Case~\ref{indcase1}.]
 It is easy to see that the induction hypothesis is maintained, provided the stated observations are verified.
There is nothing to verify in case (a).  For case (b), it is trivial that the second occurrence of $a_k$ is right of $_2y$ (as $a_k$ must be $2$-occurring).  By the right hand dual to Claim~\ref{claim3}, there is at most one letter right of $_2y$, so that we now have $\bw_n\phi=\bu$, where $n:=k$.

Finally, we must verify the claim in (c) that
$_2a_{k-1}\,{}_1a_{k+1}\,{}_2a_{k}$ is a subword of $\bu$. For
notational simplicity, let us temporarily use $a$ to denote
$a_{k-1}$ and $b$ to denote $a_k$, continuing to use $c$ for
$a_{k+1}$, the letter with its first occurrence immediately right of
$_2a_{k-1}={}_2a$ by construction.   Assume for contradiction that
the letter immediately right of $_1c$ is $d\notin \{b\}$.  If $d=x$
then both $c$ and $d$ have their second occurrence after $_2y$,
contradicting the right dual to Claim~\ref{claim3}.  So we may
assume that $d\neq x$.  By Induction Hypothesis (ii), this is the
first occurrence of $d$.  But then by Claim~\ref{claim8}, the letter
following $_1d$ is either $c$ or $x$.  It cannot be $c$ as then
$\bu[b,c]=bccb$.  It cannot be $x$ by the right dual of Claim
\ref{claim6}.  Thus in every case we have a contradiction.  Thus
$_2a_{k-1}\,{}_1a_{k+1}\,{}_2a_{k}$ is a subword of $\bu$ as
required.
\end{proof}
%
%
%

\begin{indcase}\label{indcase2} $a_{k-1}$ is empty but $a_{k}$ is not.  Let $b$ be the letter immediately right of $_1a_k$.  If\up:
\begin{enumerate}
\item[(a)]  $b=a_{k}$ \up(so that $a_ka_k$ is a subword\up) then we define $a_{k+1}$ to be empty\up;
\item[(b)] $b=x$ then the second occurrence of $a_k$ is right of $_2y$ and we set $n=k$ and have $\bw_n\phi=\bu$\up;
\item[(c)] $b\notin\{a_k,x\}$ and we set $a_{k+1}:=b$.  In this case $a_{k}a_{k+1}a_{k}$ is a subword of $\bu_2$ and the second occurrence of $a_{k+1}$ is right of $_2a_k$.
\end{enumerate}
\end{indcase}
\begin{proof}[Proof of Induction Case~\ref{indcase2}.]
  Again, it is easy to see that the induction hypothesis is maintained, provided the stated observations are verified. There is nothing to verify in Case (a).  The argument for Case (b) is essentially identical to Case (b) of Induction Case~\ref{indcase1} and we do not repeat it here.

 For Case (c) note that $a_{k+1}$ cannot have an occurrence left of $_1a_k$ by Inductive Hypothesis (ii).  So it suffices to show that there is a single letter occurring between the two occurrences of $a_k$.  Assume for contradiction that this is not true. For the remainder of the proof we let $a:=a_k$, $b:=a_{k+1}$.   Let the letter immediately right of $_1b$ be denoted by $c$, so that $abc$ is a subword of $\bu$.  We have the subword $_1a\,{}_1b\,{}_ic$ for some $i\in\{1,2\}$.  By Claim~\ref{claim7}, we cannot have $i=1$; so $i=2$.  Then by Claim~\ref{claim8} we have $c=a$ as required.
\end{proof}

 \begin{indcase}\label{indcase3}
 $a_{k}$ is empty and $a_{k-1}$ is not empty.  In this case $a_{k-1}a_{k-1}$ is a subword of $\bu$ by Induction Hypothesis \up(ii\up).  Let $c$ be the next letter right of $_2a_{k-1}$.  If\up:
 \begin{enumerate}
 \item $c=x$ then we put $n=k$ and observe that $\bw_n\phi=\bu$ as required\up;
 \item $c\neq x$ then define $a_{k+1}:=c$.
 \end{enumerate}
 \end{indcase}
 \begin{proof}
  Again, it is easy to see that the induction hypothesis is maintained, provided the stated observations are verified.
 The only claim appears in (1).  This follows because if $c=x$ then $a_n$ is empty (by Claim~\ref{claim2}), so that $\bw_n\phi=\bu$ follows immediately.
 \end{proof}

This completes the inductive construction of the substitution
$\phi$.  As $\bu$ is finite, the value of $n$ is eventually
uncovered, and we have $\bw_n\phi=\bu$ (with $x\phi=x$,
$y\phi=y$) as required. This shows that we may remove the unstable
pair $\{x,y\}$ from $\bu\approx \bv$ using $\Sigma$ (specifically,
using $\bw_n\approx \bw_n'$).  Thus $\Sigma$ is a basis for the
equational theory of $\bM(\sV)$, completing the proof of
Theorem~\ref{thm:irredundant}. \end{proof}

\end{document}